\documentclass[11pt,reqno]{amsart}
\usepackage{tikz}
\textheight    23cm
\textwidth     15.cm
\addtolength{\textheight}{-0.75in}
\oddsidemargin   .4cm
\evensidemargin  .4cm
\parskip 6pt
\usepackage{subfig}
\usepackage{epstopdf}
\usepackage{epsfig}
\usepackage{math}
\usepackage{tikz}
\usetikzlibrary{shapes,arrows}
\tikzstyle{decision} = [diamond, draw, fill=blue!20, 
    text width=4.5em, text badly centered, node distance=3cm, inner sep=0pt]
\tikzstyle{block} = [rectangle, draw, fill=blue!20, 
    text width=5em, text centered, rounded corners, minimum height=4em]
\tikzstyle{line} = [draw, -latex']
\tikzstyle{cloud} = [draw, ellipse,fill=red!20, node distance=3cm,
    minimum height=2em]
    
\usetikzlibrary{positioning}
\tikzset{main node/.style={circle,fill=blue!20,draw,minimum size=1cm,inner sep=0pt},  }
\usepackage{hyperref}

\graphicspath{{"../Code/hierarchical models/"},{"../Code/Ricci curvature/three points_g/"},{"../Code/Ricci curvature/Independence model_g/"}}

\begin{document}
\title[Ricci curvature for parametric statistics]{Ricci curvature for parametric statistics\\ via optimal transport} 
\author[Li, Mont\'ufar]{Wuchen Li and Guido Mont\'ufar}
\newcommand{\vr}{\overrightarrow}
\newcommand{\wt}{\widetilde}
\newcommand{\dd}{\mathcal{\dagger}}
\newcommand{\ts}{\mathsf{T}}
\newcommand{\gm}[1]{\textcolor{blue}{#1}}
\newcommand{\gmf}[1]{\textcolor{blue!30!red}{**#1}}
\newcommand{\wc}[1]{\textcolor{red}{#1}}

\keywords{Ricci curvature; information projection; 
	Wasserstein statistical manifold; 
	Fokker-Planck equation on parameter space; machine learning.}
\maketitle

\begin{abstract}
We elaborate the notion of a Ricci curvature lower bound for parametrized statistical models. 
Following the seminal ideas of Lott-Strum-Villani, we define this notion based on the geodesic convexity  
of the Kullback-Leibler divergence in a Wasserstein statistical manifold, that is, a manifold of probability distributions endowed with a Wasserstein metric tensor structure. Within these definitions, the Ricci curvature is related to both, information geometry and Wasserstein geometry. These definitions allow us to formulate bounds on the convergence rate of Wasserstein gradient flows and information functional inequalities in parameter space. We discuss examples of Ricci curvature lower bounds and convergence rates in exponential family models. 
\end{abstract}

\section{Introduction}

The Ricci curvature lower bound on \emph{sample space} plays a crucial role in various fields, including heat semi-groups \cite{BE} and differential geometry (Brunn-Minkowski inequality) \cite{vil2008}. In particular, it provides sharp bounds for convergence rates of diffusion processes~\cite{BE} and functional inequalities~\cite{OV}. 
In recent years, optimal transport contributes a viewpoint that connects Ricci curvature and information functionals. 
In this study, optimal transport, in particular the $L^2$-Wasserstein metric, introduces a Riemannian structure in probability density space, named density manifold~\cite{Lafferty}. 
The Ricci curvature lower bound in sample space is equivalent to the geodesic convexity of the Kullback-Leibler divergence in the density manifold\footnotemark. 
Following this angle, Lott-Strum-Villani~\cite{Lott_Villani, strum} define the Ricci curvature on non-smooth metric sample spaces, and Erbar-Maas~\cite{Maas2012} introduce it on discrete sample spaces. 

\footnotetext{Geodesic convexity is a synthetic definition. If a function $f$ on manifold $(M, g)$ is second differentiable, then $f$ is $\lambda$-geodesic convex whenever $\operatorname{Hess}_M f\succeq \lambda g$. }

In statistics and machine learning, we often are interested in constructing, or selecting, a density that models the behavior of some observed data, according to some quality criterion. 
Very often we restrict the search to a subset of densities, as this allows us to handle large state spaces and also to incorporate prior knowledge into our search. 
Parametrized statistical models are a ubiquitous and powerful approach. In this paper, we develop the theory of Ricci curvature lower bounds for this situation. 
The Ricci curvature lower bound governs the dissipation rates of the cross entropy. In the context of learning, this corresponds to the rates of convergence of gradient descent methods for minimizing the Kullback-Leibler (KL) divergence and computing information projections. 

The Wasserstein metric tensor of a statistical manifold (a parametrized set of probability densities) has been defined in~\cite{WSM1}. 
A statistical manifold endowed with a Wasserstein metric tensor structure is called Wasserstein statistical manifold. 
We define the Ricci curvature lower bound via geodesic convexity of the KL divergence on a Wasserstein statistical manifold. We obtain a definition of the Ricci-curvature that connects Wasserstein geometry~\cite{vil2008} and information geometry~\cite{Amari, IG}, much in the spirit of~\cite{LiG,WSM1}, and take a natural further step towards connecting the two fields, in particular, relating notions from learning applications and the geometry of the statistical models. 
We focus on discrete sample spaces, which allows us to present a clear picture of the relations deriving from this theory, and leave the details of continuous settings for future work. 

We consider a discrete statistical model described by a tuple $(\Theta, I, \p)$ consisting of a parameter space $\Theta$, a discrete sample space (or state space) $I=\{1,\cdots, n\}$, and a parametrization $\p\colon \Theta\rightarrow \mathcal{P}(I)$. 
Here $\mathcal{P}(I)$ denotes the set of all probability distributions on $I$. We say that $(\Theta, I, \p)$ has Ricci curvature lower bound $\kappa\in \mathbb{R}$ with respect to a given reference measure $q$, if and only if, for any $\theta\in \Theta$, it holds that 
\begin{equation*}
G_F(\theta)+\sum_{a\in I}\Big(d_{\theta\theta}p_a(\theta)\log\frac{p_a(\theta)}{q_a}- \Gamma^{W,a}(\theta)\frac{d}{d{\theta_a}} \operatorname{D}_{\operatorname{KL}}( p(\theta)\|q)\Big)\succeq \kappa G_W(\theta).  
\end{equation*}
Here $G_F$ is the Fisher-Rao metric tensor, $G_W$ is the $L^2$-Wasserstein metric tensor, $d_{\theta\theta}p$ is the second differential of the parameterization, $\Gamma^{W}(\theta)$ are the Christoffel symbols of the Wasserstein statistical manifold, and $\operatorname{D}_{\operatorname{KL}}(p(\theta)\|q)=\sum_{i=1}^np_i(\theta)\log\frac{p_i(\theta)}{q_i}$ is the KL divergence. 
This definition depends on the reference measure $q$. In statistics and learning applications, the reference measure will play the role of a target or empirical data distribution. 
A schematic illustration of the spaces and relations that we consider is provided in Figure~\ref{fig:spaces}. 

The Ricci curvature on discrete state spaces has been studied by many groups. 
(i)~Ollivier~\cite{Ollivier_Ricci} introduces a discrete Ricci curvature via $L_1$-Wasserstein metric. Many inequalities on graphs are shown under this setting; see, e.g.,~\cite{Jost2, Jost, OV2}. 
(ii)~Lin-Yau et al.~\cite{LinYau2, LinYau} also define a Ricci curvature lower bound by heat semi-groups and Bakery-Emery $\Gamma_2$ operators. 
(iii)~Erbar-Maas introduce the Ricci curvature lower bound in \cite{Maas2012} by {by means of equivalence relations with Lott-Strum-Villani} in the Wasserstein probability manifold, under which several information functional inequalities are established. This notion has been studied extensively in \cite{EMR2, EMR3, Erbar, EMR4, EMR1}. 
 However, the notion of a Ricci curvature lower bound on the parameter space of a statistical manifold has not been studied so far. 
Parametrized Wasserstein probability sub-manifolds were not introduced until recently in~\cite{WSM2, WSM1}. 
Our definition of the Ricci curvature lower bound for parametrized statistical models is close in spirit to the definitions by Lott-Strum-Villani and Erbar-Maas. 

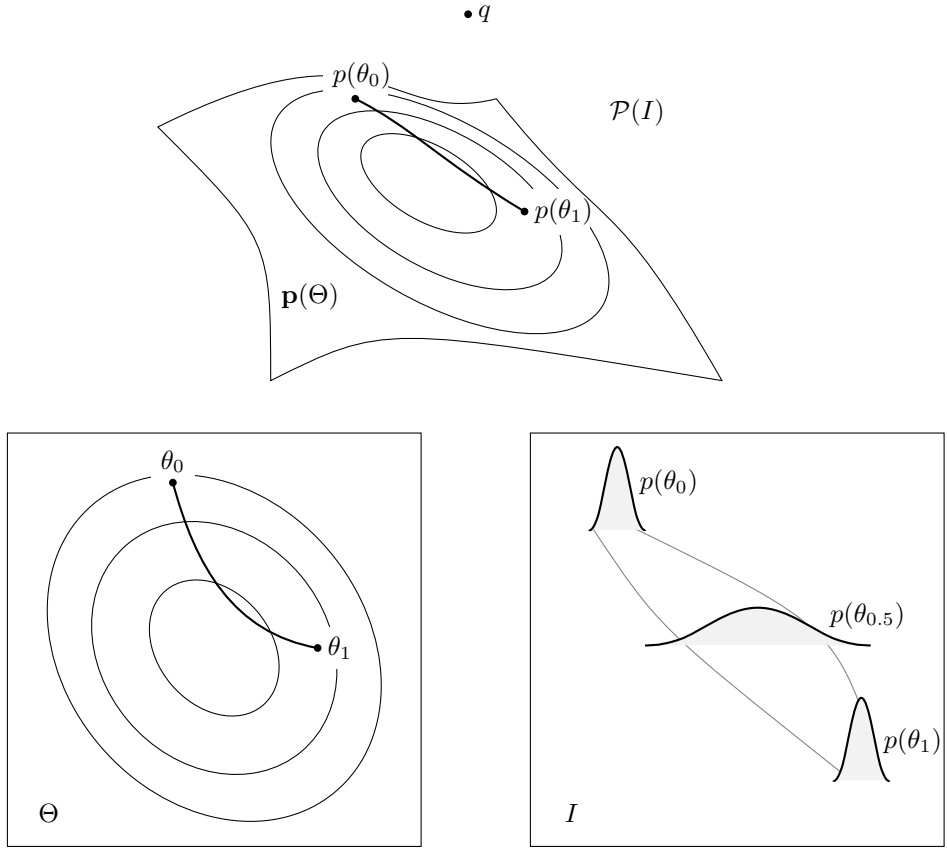
\begin{figure}
	\centering
\newcommand{\simplex}{
	\begin{tikzpicture}[x=1.5cm,y=1.5cm]
	\draw (0.5,.25) circle [x radius=2.5cm, y radius=1.2cm, rotate=-30];
	\draw (0.5,.35) circle [x radius=1.8cm, y radius=.9cm, rotate=-30];
	\draw (0.4,.5) circle [x radius=1cm, y radius=.5cm, rotate=-30];
	
	\draw[fill=black,color=black] (1.25,.25) circle (1pt)node[right, fill = white]{\small$p(\theta_1)\!\!\!$};
	
	\draw[black] (-1,-1.25) 
	.. controls (-1,0) and (-1,0) .. (-2,1)
	.. controls (0,2) and (0,1) .. (1,1.25) 
	.. controls (2,0) and (2,0.5) .. (3,-1.25) 
	.. controls (0,-.75) and (0,-.75) .. (-1,-1.25); 
	
	\draw[thick] (-.25,1.25) .. controls (.25,1) and (.5,.7) .. (1.25,.25); 
	
	\draw[fill=black,color=black] (-.25,1.25) circle (1pt)node[above, fill = white]{\small$p(\theta_0)\!\!\!$};
	\draw[fill=black,color=black] (-.25,1.25) circle (1.25pt);
	\draw[fill=black,color=black] (1.25,.25) circle (1.25pt);
	
	\draw[fill=black,color=black] (.75,2) circle (1pt)node[right, fill = white]{\small$q\!\!\!$};
	\draw[fill=black,color=black] (.75,2) circle (1.25pt);
	
	\node at (-.65,-.5) {\small$\mathbf{p}(\Theta)$};
	\node at (2.25,1.125) {\small$\mathcal{P}(I)$};
	\end{tikzpicture}
}

\newcommand{\parameter}{ 
	\begin{tikzpicture}[x=1.1cm,y=1.1cm]
	\newcommand{\para}{\draw[fill = gray!10,thick] plot[smooth, tension=.7] coordinates {(-2,1.5)(-1.85,1.65) (-1.5,2.5) (-1.15,1.65)(-1,1.5)};}
	\draw[black] (-2.5,-2.4) -- (-2.5,2.6) -- (2.5,2.6) -- (2.5,-2.4) -- (-2.5,-2.4); 
	
	\draw (0,0) circle [x radius=2.5cm, y radius=2cm, rotate=-50];
	\draw (0,0) circle [x radius=1.8cm, y radius=1.5cm, rotate=-50];
	\draw (0,0) circle [x radius=1cm, y radius=.75cm, rotate=-50]; 
	\draw[thick] (-.5,2) to[out=-75,in=170] (1.25,0);
	
	\draw[fill=black,color=black] (-.5,2) circle (1pt) node[above,fill = white]{\small$\theta_0$};
	\draw[fill=black,color=black] (1.25,0) circle (1pt)node[right, fill = white]{\small$\theta_1$};
	\draw[fill=black,color=black] (-.5,2) circle (1.25pt);
	\draw[fill=black,color=black] (1.25,0) circle (1.25pt);
	
	\node at (-2,-2) {\small$\Theta$};
	\end{tikzpicture}
}

\newcommand{\states}{ 
	\begin{tikzpicture}[x=1.1cm,y=1.1cm]
	\newcommand{\para}{\draw[fill = gray!10,thick] plot[smooth, tension=.7] coordinates {(-2,1.5)(-1.85,1.65) (-1.5,2.5) (-1.15,1.65)(-1,1.5)};}
	\draw[] (-2.5,-2.4) -- (-2.5,2.6) -- (2.5,2.6) -- (2.5,-2.4) -- (-2.5,-2.4); 
	
	\draw[gray] (-1.75,1.45) .. controls (-.95,.25) .. (1.25,-1.5); 
	\draw[gray] (-1.25,1.45) .. controls (1.2,.25) .. (1.75,-1.5); 
	
	\node[above] at (-1.45,1.25) {\begin{tikzpicture}[x=.75cm, y=1.1cm]
		\para
		\end{tikzpicture}};
	\node[right] at (-1.3,2) {\small$p(\theta_0)$};
	
	\node[] at (.25,.25) {\begin{tikzpicture}[x=3cm, y=.5cm]
		\para
		\end{tikzpicture}};
	\node[right] at (1,0.4) {\small$p(\theta_{0.5})$};
	
	\node[] at (1.5,-1.125) {\begin{tikzpicture}[x=.75cm, y=1.1cm]
		\para
		\end{tikzpicture}};
	\node[right] at (1.65,-1.125) {\small$p(\theta_1)$};
	
	\node at (-2,-2) {\small$I$};
\end{tikzpicture}
}

\begin{tikzpicture}
\node at (0,6){\simplex}; 
\node at (-3,0){\parameter}; 
\node at (4,0){\states};
\end{tikzpicture}
	\caption{
		Our discussion involves a state space $I$, a parameter space $\Theta$, and a parametrized set $\p(\Theta)$ in the space $\mathcal{P}(I)$ of probability distributions on $I$. 
		For a reference measure $q\in\mathcal{P}(I)$, a positive Ricci curvature lower bound implies that the Wasserstein geodesic connecting two distributions, $p(\theta_0)$ and $p(\theta_1)$, `bends' towards $q$. 
		The figure depicts the  geodesic as a thick curve, together with the level sets of $\operatorname{D}_{\operatorname{KL}}(p(\cdot)\|q)$, in $\Theta$ and $\p(\Theta)$. 
		In terms of the state space $I$, when $q$ is uniform, 
		a decrease of the KL divergence with respect to $q$ corresponds to an increase of the entropy, 
		meaning that along the geodesic, the `volume' of states under the distributions `bulges'. This corresponds to the synthetic notion of positive curvature in sample space. 
		Note how the geodesics are constrained to lie within the model $\p(\Theta)$, which in general does not contain $q$. 
		See Definition~\ref{def}, Theorem~\ref{RIWthm}, Proposition~\ref{thm}, and Figures~\ref{fig:Pythagorean} and~\ref{fig:curv} for more details.}
	\label{fig:spaces}
\end{figure}

This paper is organized as follows. 
In Section~\ref{sec2}, we briefly review the connections between Ricci curvature, optimal transport, and KL divergence. We further demonstrate these connections in the context of information projections. 
In Section~\ref{sec:WSM}, we introduce Wasserstein statistical manifolds in parameter space. This is intended as a short review of the definitions from~\cite{WSM1}.  
We derive the Fokker-Planck equation on parameter space, which is the Wasserstein gradient flow of the KL divergence. 
The main technical contributions of this paper are contained in Section~\ref{sec4}. 
We describe the convergence rate of the Fokker-Planck equation in terms of a Ricci curvature lower bound. 
Further, we use the notion of Ricci curvature lower bound to establish information functional inequalities. 
We also discuss methods to estimate the Ricci curvature lower bound in practice. 
In Section~\ref{sec5}, we present experiments on small examples of exponential families. These allow us to illustrate the notions introduced in the paper, and gain more intuition about their meaning.

\section{Ricci curvature and information projections} 
\label{sec2}

In this section, we review the connection of optimal transport and information theory put forward in Villani's book \cite{vil2008}, and we further connect with the notion of information projections described by Csisz\'ar-Shields~\cite{Csiszar:2004:ITS:1166379.1166380}. 
In later sections we will develop these connections for the case of parametric statistical models.  

\subsection{Wasserstein geometry} 
Consider a continuous measure space $(\Omega, g^\Omega, q)$. 
Here $\Omega$ is a finite dimensional compact smooth Riemannian manifold without boundary, $g^\Omega$ is its metric tensor, $dx$ is the volume from of $\Omega$, and $q\in C^{\infty}(\Omega)$ is the measure volume form with $\int_\Omega q(x)=1$, $q(x)>0$. 
The Ricci curvature tensor on $(\Omega, g^{\Omega}, q)$ refers to 
\begin{equation}\label{eq:rclb}
\operatorname{Ric}=\operatorname{Ric}_\Omega-\operatorname{Hess}_\Omega \log q,
\end{equation}
where $\operatorname{Ric}_\Omega$ denotes the Ricci curvature on $\Omega$ and $\operatorname{Hess}_\Omega$ is the Hessian operator on $\Omega$. 
{Note that this notion of curvature depends on the reference measure $q$. 
Later in our discussion, the reference measure will play the role of a target or empirical data distribution.} 
 
On the one hand, optimal transport, in particular the $L^2$-Wasserstein metric, introduces an infinite-dimensional Riemannian structure in density space. 
In the context of our discussion, consider the set of smooth and strictly positive densities 
\begin{equation*}
\mathcal{P}_+(\Omega)=\Big\{\rho \in C^{\infty}(\Omega)\colon \rho(x)>0,~\int_\Omega\rho(x)dx=1\Big\}. 
\end{equation*}
The tangent space of $\mathcal{P}_+(\Omega)$ at $\rho\in \mathcal{P}_+(\Omega)$ is given by 
 \begin{equation*}
 T_\rho\mathcal{P}_+(\Omega) = \Big\{\sigma\in C^{\infty}(\Omega)\colon \int_\Omega\sigma(x) dx=0 \Big\}.
 \end{equation*}  
 
 \begin{definition}[$L^2$-Wasserstein metric tensor]\label{eqn:riemannian whole density space}
 Define the inner product $g_\rho\colon {T_\rho}\mathcal{P}_+(\Omega)\times {T_\rho}\mathcal{P}_+(\Omega)\rightarrow \mathbb{R}$ by
 \begin{equation*}
 g_\rho(\sigma_1, \sigma_2)=\int_\Omega \sigma_1(x)(-\Delta_\rho)^{\dd}\sigma_2(x) dx,
  \end{equation*}
 where $\Delta^{\dd}_\rho\colon {T_\rho}\mathcal{P}_+(\Omega)\rightarrow{T_\rho}\mathcal{P}_+(\Omega)$ is the inverse of elliptical operator $\Delta_\rho = \nabla\cdot(\rho \nabla)$. 
 Here $\nabla$ and $\nabla\cdot$ are the gradient and divergence operators in $\Omega$, respectively. 
 \end{definition}
 
 Following \cite{Lafferty}, we call $(\mathcal{P}_+(\Omega), g)$ a Wasserstein density manifold or a Wasserstein manifold for short. 
 The metric tensor introduces a variational formulation of a metric function. 
More precisely, the square of the $L^2$-Wasserstein metric function is equal to the geometric energy (action) of geodesics in the Wasserstein manifold. 
For any $\rho_0,\rho_1\in \mathcal{P}_+(\Omega)$, the $L^2$-Wasserstein metric function is defined as 
 \begin{equation*}
 W(\rho_0, \rho_1)^2=\inf_{}~\Big\{\int_0^1 {g_{\rho_t}(\partial_t\rho_t, \partial_t\rho_t) }dt \colon \rho_t  \in\mathcal{P}_+(\Omega), t\in[0,1] \Big\}.
 \end{equation*}
One can extend the definitions from $\mathcal{P}_+(\Omega)$ to the set $\mathcal{P}_2(\Omega)$ of Borel probability measures with finite second moments. 
It is well known that the $L^2$-Wasserstein metric defines a metric function on $\mathcal{P}_2(\Omega)$, and {hence} $(\mathcal{P}_2(\Omega), W)$ forms a length space. 
See related analytical treatments in~\cite{vil2008}. 
 
\subsection{Wasserstein gradient flow of the KL divergence}
On the other hand, 
information theory considers a particular functional on density space, namely the KL divergence. 
Given a smooth reference measure $q\in \mathcal{P}_+(\Omega)$, the KL divergence of a given $\rho$ with respect to $q$ is defined by 
\begin{equation*}
\operatorname{D}_{\operatorname{KL}}(\rho \|q )=\int_{\Omega}\rho(x)\log\frac{\rho(x)}{q(x)}dx.
\end{equation*}
Notice that the KL divergence is precisely the free energy.  Indeed, if we write $q(x)=\frac{1}{K}e^{-V(x)}$ with $K=\int_{\Omega}e^{-V(x)}dx$, we see that 
\begin{equation*}
\begin{split}
\operatorname{D}_{\operatorname{KL}}(\rho \|q )=&\int_{\Omega}\rho(x)\log\rho(x)dx+\int_{\Omega}V(x)\rho(x)dx+\log K\\
=&- H(\rho)+\mathbb{E}_\rho[V(X)] +\log K\ ,
\end{split}
\end{equation*}
where $H(\rho)=-\int_\Omega\rho(x)\log\rho(x)\;dx$ is the Boltzmann-Shannon entropy, $X$ is a random variable satisfying the law of density $\rho$ and $\mathbb{E}$ is the expectation operator.

The Ricci curvature on sample space is related both to the KL divergence and the $L^2$-Wasserstein metric tensor. 
This interaction starts with the gradient flow of the KL divergence in the Wasserstein manifold $(\mathcal{P}_+(\Omega), g)$, 
which describes the time evolution of the density following the negative Wasserstein gradient of the KL divergence: 
\begin{equation}\label{FPE}
\begin{split}
\frac{\partial \rho_t}{\partial t}=&-\operatorname{grad}_W\operatorname{D}_{\operatorname{KL}}(\rho\|q) \\
=&\nabla\cdot(\rho_t\nabla (\log\frac{\rho_t}{q}+1))\\
=&\nabla\cdot(\rho_t\nabla V)+\Delta \rho_t. 
\end{split}
\end{equation}
The second line is by Definition~\ref{eqn:riemannian whole density space} of the Wasserstein metric tensor. 
The last equality holds since $q(x)=\frac{1}{K}e^{-V(x)}$ and $\nabla\cdot(\rho \nabla \log\rho)=\nabla\cdot(\nabla\rho)=\Delta\rho$. 

It is worth noting that there are several perspectives based on~\eqref{FPE}. Firstly, the flow~\eqref{FPE} is a well-known dynamics called \emph{Fokker-Planck equation} (FPE). 
It describes the probability transition equation of drift diffusion process 
\begin{equation*}
\dot X_t=-\nabla V(X_t)+\sqrt{2}\dot B_t,
\end{equation*}
where $B_t$ is the canonical Brownian motion in sample space. Secondly, along the flow~\eqref{FPE}, the KL divergence converges to zero. I.e.~$\rho_t$ converges to the minimizer of the KL divergence (free energy), known as the Gibbs measure, $q(x)=\frac{1}{K}e^{-V(x)}$. This reminds of iterative methods for computing information projections~\cite{Csiszar:2004:ITS:1166379.1166380} in statistics and machine learning. In this context, one seeks to reproduce the behavior of a teacher system in terms of a model. To this end, the learning rule proceeds by adjusting the model parameters so as to maximize the likelihood of the observations, which is equivalent to minimizing the divergence, for instance using Wasserstein gradient descent. The flow is the continuous limit of the gradient descent learning rule. 
We shall go to this connection shortly, in Section~\ref{sec:IP}. 

\subsection{Dissipation rates and the Ricci curvature lower bound}

As it turns out, the Ricci curvature lower bound governs the exponential dissipation rate of \eqref{FPE} towards the Gibbs measure $q$. 
In the setting of learning, this corresponds precisely to the exponential rate of convergence of the learning dynamics. 
To see this, the following calculations in dynamical system are used. One can find the convergence rate of \eqref{FPE} by comparing the ratio between the first and second time derivatives along the flow. 
By some computations, the first time derivative of the KL divergence along the flow is found to be equal to 
\begin{equation*}
\begin{split}
-\frac{d}{dt}\operatorname{D}_{\operatorname{KL}}(\rho_t\|q)=&g_{\rho_t}(\partial_t\rho_t, \partial_t\rho_t)\\
=&\int_{\Omega}\Gamma(\log\frac{\rho_t}{q}, \log\frac{\rho_t}{q})\rho_t\;dx,
\end{split}
\end{equation*}
while the second time derivative is given by 
\begin{equation*}
\begin{split}
\frac{d^2}{dt^2}\operatorname{D}_{\operatorname{KL}}(\rho_t\|q)=&2\operatorname{Hess}_W\operatorname{D}_{\operatorname{KL}}(\rho_t\|q)(\partial_t\rho_t, \partial_t\rho_t)\\
=&2\int_{\Omega}\Gamma_2( \log\frac{\rho_t}{q}, \log\frac{\rho_t}{q})\rho_t \;dx\ .
\end{split}
\end{equation*}
Here $\operatorname{Hess}_W$ is the Hessian operator with respect to the Wasserstein metric tensor, and $\Gamma$ and $\Gamma_2$ are the Bakery-Emery operators defined by
$$
\Gamma(f,f)=g^\Omega(\nabla f, \nabla f),
$$
and 
$$
\Gamma_2(f,f)=(\operatorname{Ric}_\Omega-\operatorname{Hess}_\Omega \log q)(\nabla f,\nabla f)+\operatorname{tr}(\operatorname{Hess}_\Omega f, \operatorname{Hess}_\Omega f),
$$
where $\textrm{Ric}_\Omega$ is the Ricci curvature tensor on $\Omega$, $\textrm{Hess}_\Omega$ is the Hessian operator on $\Omega$, and $\textrm{tr}$ is the trace operator.
By the above formulas, the ratio between $\frac{d}{dt}\operatorname{D}_{\operatorname{KL}}(\rho_t\|q)$ and $\frac{d^2}{dt^2}\operatorname{D}_{\operatorname{KL}}(\rho_t\|q)$ relates to the integral version of $\Gamma$, $\Gamma_2$, i.e. the expectation values of the operators $\Gamma$, $\Gamma_2$. 
 Notice that $\operatorname{tr}(\operatorname{Hess}_\Omega f, \operatorname{Hess}_\Omega f)\geq 0$. 
 Classical results \cite{vil2008} show that the lower bound of Ricci curvature governs the smallest ratio between $\frac{d}{dt}\operatorname{D}_{\operatorname{KL}}(\rho_t\|q)$ and $\frac{d^2}{dt^2}\operatorname{D}_{\operatorname{KL}}(\rho_t\|q)$, which further gives the exponential convergence rate of \eqref{FPE}. 
In addition, the above computation demonstrate that the lower bound of the Ricci curvature, informally speaking, is equivalent to the smallest eigenvalue of the Hessian operator of the KL divergence in the Wasserstein manifold.
 \begin{theorem}\label{thmR}
Given $\kappa\in\mathbb{R}$ and $q(x)\in \mathcal{P}_+(\Omega)$, the following statements are equivalent. \begin{itemize}
\item[(i)] $\kappa$ is a Ricci curvature lower bound {of} $(\Omega, g^\Omega, q)$. I.e. 
{$\kappa$ is the largest number for which, uniformly over $\Omega$, } $$\operatorname{Ric}=\operatorname{Ric}_\Omega-\operatorname{Hess}_\Omega \log q\succeq \kappa g^\Omega;$$ 
\item[(ii)] $\Gamma_2(f,f)\geq \kappa\Gamma(f,f)$, for any $f\in C^{\infty}(\Omega)$;
\item[(iii)] For any constant speed geodesic $\rho_t$, $t\in[0,1]$, connecting $\rho_0$ and $\rho_1$ in $(\mathcal{P}_2(\Omega), W)$, 
\begin{equation*}
\operatorname{D}_{\operatorname{KL}}(\rho_t\|q)\leq (1-t) \operatorname{D}_{\operatorname{KL}}(\rho_0\|q)+t \operatorname{D}_{\operatorname{KL}}(\rho_1\|q)-\frac{\kappa}{2}t(1-t)W(\rho_0, \rho_1)^2.
\end{equation*}
\end{itemize}
\end{theorem}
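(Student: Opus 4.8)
The plan is to establish the cycle of implications $(i)\Rightarrow(ii)\Rightarrow(iii)\Rightarrow(i)$, following the classical arguments (Bakry--\'Emery~\cite{BE}, Otto--Villani, von Renesse--Sturm); all of the underlying analytic facts are collected in \cite{vil2008}.

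\emph{Step 1: $(i)\Leftrightarrow(ii)$.} The bridge is the pointwise Bochner identity for the weighted Laplacian $L=\Delta-\nabla V\cdot\nabla$ associated with $q=\frac1K e^{-V}$: for every $f\in C^\infty(\Omega)$,
\[
\Gamma_2(f,f)=\operatorname{tr}(\operatorname{Hess}_\Omega f,\operatorname{Hess}_\Omega f)+\big(\operatorname{Ric}_\Omega-\operatorname{Hess}_\Omega\log q\big)(\nabla f,\nabla f),
\]
which is exactly the displayed formula for $\Gamma_2$. Since $\operatorname{tr}(\operatorname{Hess}_\Omega f,\operatorname{Hess}_\Omega f)\ge0$, the bound $\operatorname{Ric}=\operatorname{Ric}_\Omega-\operatorname{Hess}_\Omega\log q\succeq\kappa g^\Omega$ immediately gives $\Gamma_2(f,f)\ge\kappa\,\Gamma(f,f)$. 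Conversely, fix $x_0\in\Omega$ and $v\in T_{x_0}\Omega$; in normal coordinates at $x_0$ a function that is linear in the coordinates satisfies $\nabla f(x_0)=v$ and $\operatorname{Hess}_\Omega f(x_0)=0$, so $(ii)$ forces $(\operatorname{Ric}_\Omega-\operatorname{Hess}_\Omega\log q)(v,v)\ge\kappa\,g^\Omega(v,v)$; as $x_0$ and $v$ are arbitrary this is $(i)$.

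\emph{Step 2: $(ii)\Rightarrow(iii)$.} Let $\rho_t$, $t\in[0,1]$, be a constant-speed geodesic in $(\mathcal P_2(\Omega),W)$ from $\rho_0$ to $\rho_1$, written via its velocity potential $\Phi_t$ solving the continuity equation $\partial_t\rho_t+\nabla\cdot(\rho_t\nabla\Phi_t)=0$ and the Hamilton--Jacobi equation $\partial_t\Phi_t+\tfrac12\|\nabla\Phi_t\|^2=0$. Put $\phi(t)=\operatorname{D}_{\operatorname{KL}}(\rho_t\|q)$. Differentiating twice and integrating by parts — the same manipulation as in the dissipation-rate computation preceding the theorem, but now with a free potential $\Phi_t$ in place of $\log\frac{\rho_t}{q}$ — yields $\phi''(t)=\int_\Omega\Gamma_2(\Phi_t,\Phi_t)\,\rho_t\,dx$. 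By $(ii)$, $\phi''(t)\ge\kappa\int_\Omega\Gamma(\Phi_t,\Phi_t)\rho_t\,dx=\kappa\int_\Omega\|\nabla\Phi_t\|^2\rho_t\,dx=\kappa\,W(\rho_0,\rho_1)^2$, the last step because the geodesic has constant speed $W(\rho_0,\rho_1)$. The function $\psi(t)=\phi(t)-(1-t)\phi(0)-t\phi(1)+\tfrac{\kappa}{2}t(1-t)W(\rho_0,\rho_1)^2$ then satisfies $\psi(0)=\psi(1)=0$ and $\psi''\ge0$, hence $\psi\le0$ on $[0,1]$, which is precisely the inequality in $(iii)$.

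\emph{Step 3: $(iii)\Rightarrow(i)$, and the main obstacle.} Reading $(iii)$ infinitesimally at an arbitrary $\rho\in\mathcal P_+(\Omega)$ along the geodesic with initial velocity $\sigma=-\nabla\cdot(\rho\nabla\Phi)$ gives $\operatorname{Hess}_W\operatorname{D}_{\operatorname{KL}}(\cdot\|q)\succeq\kappa\,g$, i.e.\ $\int_\Omega\Gamma_2(\Phi,\Phi)\rho\,dx\ge\kappa\int_\Omega\|\nabla\Phi\|^2\rho\,dx$ for all $\Phi\in C^\infty(\Omega)$; concentrating $\rho$ near a point $x_0$ and taking $\Phi$ with $\operatorname{Hess}_\Omega\Phi(x_0)=0$ localizes this to the pointwise inequality $(\operatorname{Ric}_\Omega-\operatorname{Hess}_\Omega\log q)(v,v)\ge\kappa\,g^\Omega(v,v)$, which is $(i)$. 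The genuinely delicate point in this circle of ideas is not the algebra but the regularity of Wasserstein geodesics: McCann's displacement interpolant need not lie in $\mathcal P_+(\Omega)$ and $\Phi_t$ generically develops singularities, so the computation of $\phi''$ in Step 2 and the differentiation of $(iii)$ in Step 3 must be justified by approximation — regularizing the endpoints, smoothing along the heat semigroup, and invoking the Alexandrov/Monge--Amp\`ere second-derivative bounds for the optimal map — as carried out in detail in \cite{vil2008}.
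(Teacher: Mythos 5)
Your cycle of implications $(i)\Rightarrow(ii)\Rightarrow(iii)\Rightarrow(i)$ is the standard classical proof and is sound as a sketch; note, however, that the paper itself does not prove Theorem~\ref{thmR} at all. It quotes it as a known result, citing \cite{vil2008} (and attributing (ii) to Bakry--\'Emery and (iii) to Lott--Sturm--Villani), and only offers as motivation the heuristic computation of $\frac{d}{dt}\operatorname{D}_{\operatorname{KL}}(\rho_t\|q)$ and $\frac{d^2}{dt^2}\operatorname{D}_{\operatorname{KL}}(\rho_t\|q)$ along the gradient flow. So your proposal is not an alternative route but a reconstruction of the argument the paper leaves to the literature: your Step~2 is precisely the geodesic analogue of the paper's dissipation computation (with a free potential $\Phi_t$ and the Hamilton--Jacobi equation replacing $\log\frac{\rho_t}{q}$), while Steps~1 and~3 supply the Bochner-identity equivalence and the localization that the paper never writes down. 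Two remarks: (a) you correctly read condition (ii) as a pointwise inequality --- the paper's displayed definition of $\Gamma$ carries a spurious integral sign --- and the pointwise reading is what you need in Step~2 in order to integrate against $\rho_t\,dx$; (b) your closing caveat is the honest assessment of where the real work lies: the Otto-calculus identities $\phi''(t)=\int_\Omega\Gamma_2(\Phi_t,\Phi_t)\rho_t\,dx$ and the infinitesimal reading of (iii) are formal until justified for non-smooth displacement interpolants, and your proof is complete only modulo the approximation arguments in \cite{vil2008} that you invoke --- which is the same level of rigor at which the paper operates.
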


Theorem \ref{thmR} opens the door to define a notion of Ricci curvature lower bound on sample space via its equivalent statements. 
In the literature, Bakery-Emery \cite{BE} define the Ricci curvature lower bound by applying (ii) for smooth Riemannian sample spaces, 
while Lott-Strum-Villani~\cite{Lott_Villani, strum} define it using (iii) for non-smooth metric sample spaces, and Erbar-Maas~\cite{Maas2012} define it by (iii) in a discrete sample space. 
In this paper, we shall define the notion of Ricci curvature lower bound for parametric statistics taking an approach based on (iii), {known as the geodesic convexity property of the KL divergence}. 

\subsection{Learning in a parametrized model} 
In statistics and machine learning applications one often considers a parametrized set $\{p(\cdot;\theta)\colon \theta\in\mathbb{R}^d\}$ of candidate probability distributions from which one wishes to choose one to model the distribution of some given data. 

One motivation for using parametrized models is reducing the dimensionality associated with large state spaces. For instance, we may be considering a state space consisting of images presented as arrays of pixel intensities, corresponding to $\{0,1\}^n$ with $n$ easily in the order of thousands. 
In this case, storing a probability distribution as a vector $p\in\mathbb{R}^{2^n}$ of individual probabilities $p(x)$, $x\in\{0,1\}^n$, is an impossibility. With a parametric model, instead of storing the probability vector, we store only a parameter vector $\theta\in\mathbb{R}^d$, with a more manageable $d$, and fix a mapping that allows us to recover individual values $p(x;\theta)$ of the probability distribution for a given $x$, or, in other cases, which allows us to generate samples from $p(\cdot;\theta)$ that we can also use to estimate any expectation values of interest. Reducing the dimensionality is useful not only in terms of storage, but also in a statistical sense, in relation to overfitting.  Without going into details, the richer the class of hypotheses, with more free parameters, the more prone we are to fitting statistical nuisances of the data, instead of capturing the true general behavior of the data. By working with a parametrized model, we can incorporate priors into the learning system and limit its vulnerability to overfitting.  

When working with a parametrized model, obtaining the best possible hypothesis, e.g. the maximizer of the data likelihood, is usually a non trivial problem and one has to resort to iterative methods. 
A relevant question then is the computational effort needed for this. 
In particular, one is interested in the number of iterations needed until reaching a solution that is within $\epsilon$ of the best possible. 
The Ricci curvature can be regarded as a way to obtain bounds on the convergence rate of gradient optimization of the KL divergence for a given a target distribution, uniformly over the start distribution. 
We elaborate on this in the next paragraph. The situation is illustrated in Figures~\ref{fig:spaces},~\ref{fig:Pythagorean}, and~\ref{fig:curv}. 

\subsection{I-projections}\label{sec:IP}
In the context of information theory and statistics, Csisz\'ar-Shields~\cite{Csiszar:2004:ITS:1166379.1166380} define the \emph{I-projection} of a distribution $Q$ onto a non-empty closed convex set $\mathcal{N}$ of distributions as the point $P^\ast\in\mathcal{N}$ such that 
$$
D(P^\ast\| Q) = \min_{P\in\mathcal{N}} D(P\|Q). 
$$
The notion of I-projection considers the minimization of the KL divergence with respect to the first argument, but it is also relevant in the context of maximum likelihood estimation, where the minimization is with respect to the second argument. 
Given an empirical data distribution $P$, a maximum likelihood estimator over a set $\mathcal{E}$ is a point $P^\ast\in \overline{\mathcal{E}}$ (the closure of $\mathcal{E}$), with 
$$
D(P\|P^\ast) = \inf_{Q\in\mathcal{E}}D(P\|Q). 
$$ 
If we consider an exponential family model $\mathcal{E}=\{p \propto Q\exp(\theta^{\ts} F) \colon \theta\in \mathbb{R}^d \}$ on a finite state space $I$, 
with sufficient statistics $F\colon I\to \mathbb{R}^d$ and reference measure $Q\in\mathcal{P}_+(I)$, then the maximum likelihood estimator $P^\ast$ of the target distribution $P$ can be obtained as the I-projection of $Q$ onto the orthogonal linear family defined by  $\mathcal{N}=\{ p \colon \sum_x F(x)p(x)= \sum_x F(x)P(x)\}$. 
We have namely that
$$
P^\ast=\operatorname{argmin}_{Q\in\mathcal{E}}D(P\|Q) = \operatorname{argmin}_{P\in\mathcal{N}}D(P\|Q). 
$$
This is a consequence of the well known Pythagorean relation~\cite{Csiszar:2004:ITS:1166379.1166380,Amari} illustrated in Figure~\ref{fig:Pythagorean}. 
\begin{figure}
\centering 
\begin{tikzpicture}[x=1.5cm,y=1.5cm]
\draw[gray, thick] (-1,-1) .. controls (-1,0) and (-1,0) .. (-2,1);
\draw[gray, thick] (2.6,-1) .. controls (2,0.5) and (2,0) .. (1,1);
\draw[gray, thick] (-2,1) .. controls (0,2) and (0,1) .. (1,1);
\draw[above] (-1.8,1.2) node {\textcolor{black}{$\mathcal{E}$}}; 	

\draw[gray, thick,-] (0,-1.125) -- (0,2.5); 
\draw[black, thick,-] (0,.5) -- (0,2); 
\draw[left] (0,2.5) node {\textcolor{black}{$\mathcal{N}$}}; 

\draw (0.05,1.15)node [left]{\small$D(P\|P^\ast)$};	

\draw (0.85,-.05) node[left,fill=white] {\small$D(P^\ast\|Q)$};	
\draw[black, thick] (0,.5) .. controls (.5,.35) .. (1.25,-.125); 

\draw[black, thick] (0,2) .. controls (.5,1.25) .. (1.25,-.125) node[midway, right] {\small$D(P\|Q)$};	

\draw [black,fill, fill=black] (1.25,-.125) circle [radius=1.2pt];	 
\draw[right] (1.25,-.125) node {\textcolor{black}{$Q$}}; 	

\draw [black,fill, fill=black] (0,0.5) circle [radius=1.2pt]; 
\draw[left] (0,0.5) node {\textcolor{black}{$P^\ast$}}; 

\draw [black,fill, fill=black] (0,2) circle [radius=1.2pt]; 
\draw[right] (0,2) node {\textcolor{black}{$P$}}; 
\draw[gray, thick] (-1,-1) .. controls (0,-.5) and (0,-.5) .. (2.6,-1);
\end{tikzpicture}
	\caption{For a distribution $Q$ in an exponential family $\mathcal{E}$ and a distribution $P$ in an orthogonal linear family $\mathcal{N}$, the Pythagorean relation holds: $D(P\|Q) = D(P\| P^\ast) + D(P^\ast\|Q)$, where $P^\ast$ is the unique intersection point of $\mathcal{E}$ and $\mathcal{N}$. }
	\label{fig:Pythagorean}
\end{figure}
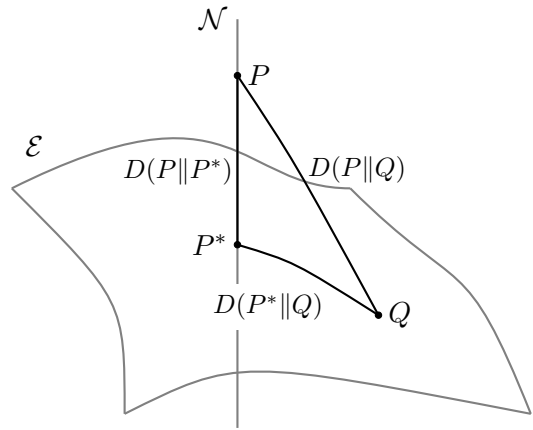

Csisz\'ar and Shields~\cite{Csiszar:2004:ITS:1166379.1166380} consider iterative methods for computing I-projections, and obtain upper bounds on the divergence along the resulting parameter trajectories, 
which describe the convergence to the optimum value. 
For two sets of distributions, $\mathcal{P}$ and $\mathcal{Q}$, together with two functions $D(\cdot,\cdot)\colon \mathcal{P}\times\mathcal{Q}\to \mathbb{R}$ and $\delta(\cdot,\cdot)\colon \mathcal{P}\times \mathcal{P}\to \mathbb{R}$, satisfying certain conditions, they describe an iterative algorithm (alternating divergence minimization) which iterates $p_n\in \mathcal{P}$ and $q_n\in\mathcal{Q}$, and give an upper bound of the form 
\begin{equation}\label{I_P}
D(p_{n+1}, q_n)  - D_{\text{min}}\leq \delta(p_\infty, p_n) - \delta(p_\infty, p_{n+1}) . 
\end{equation}

In this paper we are in the special setting where $\mathcal{Q}=\{q\}$ and $\mathcal{P}$ is the set of all densities. There is a natural connection between~\eqref{I_P} and the Fokker-Planck-equation~\eqref{FPE}. 
Indeed, setting $D$ as the KL divergence and 
$q_n=q=p_\infty$, $p_n=\rho_t$, $p_{n+1}=\rho_{t+\Delta t}$, where $\Delta t$ is the step size, we demonstrate in Proposition~\ref{thm} that we can substitute 
\begin{equation*}
\delta(q, p)=\frac{1}{2\kappa\Delta t}\operatorname{D}_{\operatorname{KL}}(p\|q), 
\end{equation*}
where $\kappa$ is the Ricci curvature lower bound that we will define later on. 
Strictly speaking, for this correspondence, we need to assume that $\kappa>0$, which is a natural requirement similar to requiring that the KL divergence is geodesic convex in set $\mathcal{P}$. Each step dissipation in \eqref{I_P} then gives 
\begin{equation*}
\begin{split}
\operatorname{D}_{\operatorname{KL}}(\rho_{t+\Delta t}\| q)  - \operatorname{D}_{\text{min}}
\leq & \delta(p_\infty, p_n) - \delta(p_\infty, p_{n+1})\\
= & \frac{1}{2\kappa\Delta t}\Big\{\operatorname{D}_{\operatorname{KL}}(\rho_t\|q) - \operatorname{D}_{\operatorname{KL}}(\rho_{t+\Delta t}\| q)\Big\}\\
=&-\frac{1}{2\kappa}  \frac{d}{dt}\operatorname{D}_{\operatorname{KL}}(\rho_t\|q)+o(\Delta t).
\end{split}
\end{equation*}
In other words, the Fokker-Planck equation is a monotone information projection flow, in which the dissipation quantity is governed by the difference of relative entropy divided by twice the Ricci curvature lower bound. 
In the limit where $\Delta t$ goes to zero, 
\begin{equation*}
\operatorname{D}_{\operatorname{KL}}(\rho_{t}\| q)  - \operatorname{D}_{\text{min}}\leq -\frac{1}{2\kappa}\frac{d}{dt}\operatorname{D}_{\operatorname{KL}}(\rho_t\|q).
\end{equation*}
Gr\"onwall's inequality then implies that this I-projection flow \eqref{FPE} converges to the minimizer at the rate of $e^{-2\kappa t}$, i.e. 
\begin{equation*}
\operatorname{D}_{\operatorname{KL}}(\rho_{t}\| q)  - \operatorname{D}_{\text{min}}\leq e^{-2\kappa t}\Big( \operatorname{D}_{\operatorname{KL}}(\rho_{0}\| q)  - \operatorname{D}_{\text{min}} \Big) . 
\end{equation*}
The above shows that the learning rate for the Fokker-Planck equation is linear, whose lower bound is governed by $\kappa$. 
Following these connections, we {will} {pursue the definition} of the Ricci curvature lower bound on parameter space. 
The convergence rate, in relation to the Ricci curvature lower bound and the geodesic convexity of the KL divergence, is illustrated schematically in Figure~\ref{fig:curv}. 
More details will be provided in Proposition~\ref{thm}. 

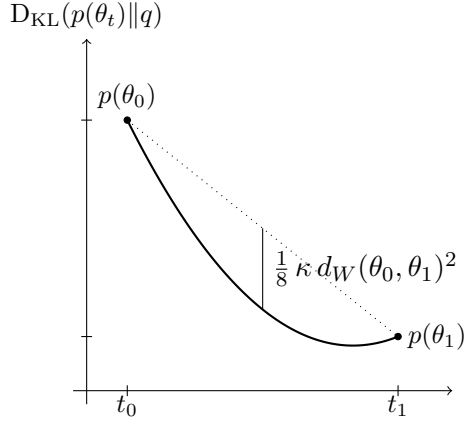
\begin{figure}
	\centering
	\newcommand{\DKL}{\operatorname{D}_{\operatorname{KL}}}
	\definecolor{bl}{rgb}{0,0,0}
	\begin{tikzpicture}[x=1.2cm,y=1.2cm]
	\draw[->, scale=3] (-.2,-.1) -- (1.2,-.1) node[right]{}; 
	\draw[->, scale=3] (-.15,-.15) -- (-.15,1.2)node[above]{\small $\DKL(p(\theta_t)\|q)$}; 
	
	\draw[dotted,scale=3] (0,.9) -- (1,0.1) ;
	\draw[samples=100,scale=3,domain=0:1,smooth,variable=\x,bl,thick] plot ({\x}, {.1+.8*(1-\x) - 1.2*\x*(1-\x)});
	
	\draw[-, scale=3] (-.17,.9) -- (-.13,.9); 
		
	\draw[fill=bl,color=bl,scale=3] (0,.9) circle (0.4pt) node[above]{\small$p(\theta_0)$};
	
	\draw[-, scale=3] (-.17,0.1) -- (-.13,0.1); 
	
	\draw[fill=bl,color=bl,scale=3] (1,0.1) circle (0.4pt)node[right]{\small$p(\theta_1)$};

	\draw[-, scale=3] (0,-.12) -- (0,-.08) node[below]{\small$t_0$};
	\draw[-, scale=3] (1,-.12) -- (1,-.08)node[below]{\small$t_1$};
	
	\draw[-, scale=3] (.5,.5) -- (.5,.2)node[midway,right]{$\tfrac18 \,\kappa\, d_W(\theta_0,\theta_1)^2$};
	\end{tikzpicture}
	\caption{Illustration of the Ricci curvature lower bound $\kappa$ in connection to the geodesic convexity of the KL divergence and the rate of convergence of the information projection flow. 
		Here $p(\theta_t)$ is a Wasserstein geodesic connecting $p(\theta_0)$ and $p(\theta_1)$. 
		{When $q=p(\theta_1)$, the KL divergence $\operatorname{D}_{\operatorname{KL}}(p(\theta_t)\|q)$ is monotonically decreasing}. } 
	\label{fig:curv}
\end{figure}

\section{Wasserstein statistical manifolds}\label{sec:WSM}

In preparation for the definitions and results on the Ricci curvature that we will present in the next section, we briefly review the definition of a Wasserstein statistical manifold with discrete sample space from~\cite{WSM1}, and present the Fokker-Planck equation on parameter space.

\subsection{Wasserstein geometry on the probability simplex}
We recall the definition of discrete probability simplex with $L^2$-Wasserstein Riemannian metric. Consider the discrete sample space $I=\{1,\cdots, n\}$. 
The probability simplex on $I$ is the set 
\begin{equation*}
\mathcal{P}(I) = \Big\{(p_1,\cdots, p_n)\in \mathbb{R}^n \colon \sum_{i=i}^n p_i=1,\quad  p_i\geq 0\Big\}. 
\end{equation*}
Here $p=(p_1,\ldots, p_n)$ is a probability vector with coordinates $p_i$ corresponding to the probabilities assigned to each node $i\in I$. 
The probability simplex $\mathcal{P}(I)$ is a manifold with boundary. 
We denote the interior by $\mathcal{P}_+(I)$. 
This consists of the strictly positive probability distributions, with $p_i>0$ for all $i\in I$. 
To simplify the discussion, we will focus on the interior $\mathcal{P}_+(I)$. 
For the studies related to the boundary $\partial\mathcal{P}(I)$, we refer the reader to~\cite{LiG}. 

Next we define the $L^2$-Wasserstein metric tensor on $\mathcal{P}_+(I)$, which also encodes the metric tensor of discrete states $I$. 
We need to give a ground metric notion on sample space. We do this in terms of a undirected graph with weighted edges, $G=(I, E, \omega)$, where $I$ is the vertex set, $E\subseteq {I\choose 2}$ is the edge set, and $\omega=(\omega_{ij})_{i,j\in I}\in \mathbb{R}^{n\times n}$ is a matrix of edge weights satisfying 
$$
\omega_{ij}=
\begin{cases}
\omega_{ji}>0, & \text{if $(i,j)\in E$}\\
0, & \text{otherwise}
\end{cases}.
$$ 
The set of neighbors (adjacent vertices) of $i$ is denoted by $N(i)=\{j\in V\colon (i,j)\in E\}$. 
The normalized volume form on node $i\in I$ is given by $d_i=\frac{\sum_{j\in N(i)}\omega_{ij}}{\sum_{i=1}^n\sum_{i'\in N(i)}\omega_{ii'}}$. 

The graph structure $G=(I, E, \omega)$ induces a graph Laplacian matrix function.  \begin{definition}[Weighted Laplacian matrix]
	\label{def2}
	Given an undirected weighted graph $G=(I,E,\omega)$, with $I=\{1,\ldots, n\}$, the matrix function $L(\cdot):\mathbb{R}^n\rightarrow \mathbb{R}^{n\times n}$ is defined by
	\begin{equation*}
	L(p)=D^{\ts}\Lambda(p)D,\quad p=(p_i)_{i=1}^n\in \mathbb{R}^n,
	\end{equation*}
	where 
	\begin{itemize}
		\item 
		$D \in \mathbb{R}^{|E|\times n}$ is the discrete gradient operator defined by 
		\begin{equation*} 
		D_{(i,j)\in {E}, k\in V}=\begin{cases}
		\sqrt{\omega_{ij}}, & \text{if $i=k$, $i>j$}\\ 
		-\sqrt{\omega_{ij}}, & \text{if $j=k$, $i>j$}\\
		0, & \text{otherwise}
		\end{cases},
		\end{equation*}
		\item $-D^{\ts}\in \mathbb{R}^{n\times |E|}$ is the oriented incidence  matrix,
		and 
		\item $\Lambda(p)\in \mathbb{R}^{|E|\times |E|}$ is a weight matrix depending on $p$, 
		\begin{equation*}
		\Lambda(p)_{(i,j)\in E, (k,l)\in E}=\begin{cases}
		\frac{1}{2}(\frac{1}{d_i}p_i+\frac{1}{d_j}p_j) & \text{if $(i,j)=(k,l)\in E$}\\ 
		0 & \text{otherwise}
		\end{cases}.
		\end{equation*}
	\end{itemize}
\end{definition}
The Laplacian matrix function $L(p)$ is the discrete analog of the weighted Laplacian operator $-\nabla\cdot(\rho \nabla)$ from Definition~\ref{eqn:riemannian whole density space}. 

We are now ready to present the $L^2$-Wasserstein metric tensor. Consider the tangent space of $\mathcal{P}_+(I)$ at $p$, 
\begin{equation*}
T_p\mathcal{P}_+(I) = \Big\{(\sigma_i)_{i=1}^n\in \mathbb{R}^n\colon  \sum_{i=1}^n\sigma_i=0 \Big\}.
\end{equation*}
Denote the space of \emph{potential functions} on $I$ by $\mathcal{F}(I)=\mathbb{R}^{n}$, and consider the quotient space 
\begin{equation*}
\mathcal{F}(I)/ \mathbb{R}=\{[\Phi]\mid (\Phi_i)_{i=1}^n\in \mathbb{R}^n\},
\end{equation*}
where $[\Phi]=\{(\Phi_1+c,\cdots, \Phi_n+c)\colon c\in\mathbb{R}\}$ are functions defined up to addition of constants. 

We introduce an identification map via the weighted Laplacian matrix $L(p)$:
\begin{equation*}
\V\colon\mathcal{F}(I)/\mathbb{R} \rightarrow T_p\mathcal{P}_+(I),\quad\quad \V_\Phi=L(p)\Phi.
\end{equation*} 
We know that $L(p)$ has only {one} simple zero eigenvalue with eigenvector $c(1,1,\cdots, 1)$, for any $c\in \mathbb{R}$. This is true since for $(\Phi_i)_{i=1}^n\in\mathbb{R}^n$,
\begin{equation*}
\Phi^{\ts}L(p)\Phi=(D\Phi)^{\ts}\Lambda(p)(D\Phi)=\sum_{(i,j)\in E}\omega_{ij}(\Phi_i-\Phi_j)^2(\frac{1}{2}(\frac{1}{d_i}p_i+\frac{1}{d_j}p_j))=0,
\end{equation*}
implies $\Phi_i=\Phi_j$, $(i,j)\in E$. 
It the graph is connected, as we assume, then $(\Phi_i)_{i=1}^n$ is a constant vector. 
Thus $V_\Phi\colon \mathcal{F}(I)/\mathbb{R}\rightarrow T_p\mathcal{P}_+(I)$ is a well defined map,
linear, and one to one. 
I.e., $\mathcal{F}(I)/\mathbb{R}\cong T_p^\ast\mathcal{P}_+(I)$, where $T_p^\ast\mathcal{P}_+(I)$ is the cotangent space of $\mathcal{P}_+(I)$.
This identification induces the following inner product on 
$T_p\mathcal{P}_+(I)$. 

\begin{definition}[$L^2$-Wasserstein metric tensor]\label{d9}
	The inner product 
	$g_p :T_p\mathcal{P}_+(I)\times T_p\mathcal{P}_+(I)  \rightarrow \mathbb{R}$ 
	takes any two tangent vectors $\sigma_1=\V_{\Phi_1}$ and $\sigma_2=\V_{\Phi_2}\in T_p\mathcal{P}_+(I)$ to 
	\begin{equation}\begin{split}\label{formula}
	g_p(\sigma_1, \sigma_2)=\sigma_1^{\ts}\Phi_2=\sigma_2^{\ts}\Phi_1=\Phi_1^{\ts}L(p)\Phi_2.
	\end{split} 
	\end{equation}
	In other words, 
	\begin{equation*}
	g_p(\sigma_1,\sigma_2):={\sigma_1}^{\ts}L(p)^{\dagger}\sigma_2,\quad \text{for any $\sigma_1,\sigma_2\in T_p\mathcal{P}_+(I)$}, 
	\end{equation*}
	where $L(p)^{\dd}$ is the pseudo inverse of $L(p)$. 
\end{definition}
Following the inner product 
\eqref{formula}, 
the Wasserstein metric (distance function)  $W\colon\mathcal{P}_+(I)\times\mathcal{P}_+(I)\rightarrow\mathbb{R}$ is defined by 
\begin{equation}
W(p^0,p^1)^2:= \inf_{p(t), \Phi(t)}~\Big\{\int_0^1\Phi(t)^{\ts}L(p(t))\Phi(t) dt\Big\}\ .
\label{eq:wm}
\end{equation}
Here the infimum is taken over pairs $(p(t), \Phi(t))$ with 
$p\in H^1((0,1), \mathbb{R}^{n})$ and $\Phi\colon
[0, 1]\rightarrow\mathbb{R}^n$ measurable, satisfying 
\begin{equation*}
\frac{d}{dt}p(t)-L(p(t))\Phi(t)=0,\quad p(0)=p^0,\quad p(1)=p^1. 
\end{equation*}
\subsection{Wasserstein statistical manifold}
We next consider a statistical model defined by a triplet $(\Theta, I, \p)$. 
Here, $I=\{1,\cdots, n\}$ is the sample space, 
$\Theta$ is the parameter space, which is an open subset of $\mathbb{R}^d$, $d\leq n-1$, 
and $\p\colon \Theta\rightarrow \mathcal{P}_+(I)$ is the parametrization function, 
\begin{equation*}
 p(\theta)=(p_i(\theta))_{i=1}^n,\quad \theta\in \Theta. 
\end{equation*}

We define a Riemannian metric $g^W$ on $\Theta$ as the pull-back of metric $g$ on $\mathcal{P}_+(I)$. 
In other words, we require that $\p\colon (\Theta,g^W)\rightarrow (\mathcal{P}_+(I), g)$ is an isometric embedding: 
\begin{equation*}
\begin{split}
g^W_\theta(a,b):=&g^W_{ p(\theta)}(d_\theta p(\theta)(a), d_\theta p(\theta)(b))\\
=&\big(d_\theta p(\theta)(a)\big)^{\ts}L( p(\theta))^{\dd}\big(d_\theta p(\theta)(b)\big),\quad \text{for all $a,b\in T_\theta(\Theta)$}.
\end{split}
\end{equation*}
Since $d p(\theta)(a)=\big(\sum_{j=1}^{d}\frac{\partial p_i(\theta)}{\partial\theta_j}a_j\big)_{i=1}^n=J_\theta p(\theta)a$, 
we arrive at the following definition.
\begin{definition}[$L^2$-Wasserstein metric tensor on parameter space]
	For any pair of tangent vectors $a,b\in T_\theta \Theta=\mathbb{R}^d$, define 
	\begin{equation}\label{GW}
	G_W(\theta):=J_\theta  p(\theta)^{\ts}L( p(\theta))^{\dd}J_\theta  p(\theta),
	\end{equation}
	and 
	\begin{equation*}
	g^W_\theta(a,b):=a^{\ts}G_W(\theta)b,
	\end{equation*}
	where $J_\theta( p(\theta))=(\frac{\partial  p_i(\theta)}{\partial \theta_j})_{1\leq i\leq n, 1\leq j\leq d}\in \mathbb{R}^{n\times d}$ is the Jacobi matrix of the parametrization~$\p$.
\end{definition}
This inner product is consistent with the restriction of the Wasserstein metric $g^W$ to $ p(\theta)$. 
We will assume that $\operatorname{rank}(J_{\theta} p(\theta))=d$, so that the parametrization $\p_i$ is locally injective and the metric tensor $g^W$ is positive definite. 
We call $(\Theta, I, \p)$, together with the induced Riemannian metric $g^W$, \emph{Wasserstein statistical manifold} (WSM). 

In this case, the constrained Wasserstein {distance function} $d_W\colon \Theta\times\Theta \rightarrow \mathbb{R}_+$ is given by the geometric action energy  
\begin{equation}
d_W(\theta_0, \theta_1)^2=\inf_{\theta(t)\in C^1([0,1], \Theta)}\Big\{\int_0^1 \dot\theta(t)^{\ts}G_W(\theta(t))\dot\theta(t) dt\colon \theta(0)=\theta_0,~\theta(1)=\theta_1\Big\}.
\label{eq:wmc}
\end{equation}
{When working on the full probability simplex, with $\theta = p$, the metric function 
$d_W$ corresponds precisely to the metric function $W$ given in~\eqref{eq:wm}. }

\subsection{Fokker-Planck equation on parameter space}
We next derive the Fokker-Planck equation on parameter space by Wasserstein gradient flow of KL divergence. 

Given a reference measure $q\in \mathcal{P}_+(I)$,  consider the Kullback-Leibler divergence (relative entropy) on parameter space
 \begin{equation*}
\operatorname{D}_{\operatorname{KL}}( p(\theta)\| q)=\sum_{i=1}^n  p_i(\theta)\log \frac{ p_i(\theta)}{q_i}.
 \end{equation*}
\begin{proposition}[Fokker-Planck equation on parameter space]
The gradient flow for the negative Boltzmann-Shannon entropy in $(\Theta, g)$ is  
\begin{equation}\label{FPE2}
\frac{d\theta}{dt}=-\big(J_\theta  p(\theta)^{\ts}L( p(\theta))^{\dd}J_\theta  p(\theta)\big)^{\dd}J_\theta  p(\theta)^{\ts}\log \frac{ p(\theta)}{q}.
\end{equation}
\end{proposition}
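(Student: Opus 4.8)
The plan is to follow the standard recipe for gradient flows on a Riemannian manifold and then simply unwind the definitions of $G_W$ and of the functional. Recall that on $(\Theta, g^W)$ with $g^W_\theta(a,b)=a^{\ts}G_W(\theta)b$, the gradient flow of a smooth $\mathcal{F}\colon\Theta\to\mathbb{R}$ is $\dot\theta=-\operatorname{grad}_{g^W}\mathcal{F}(\theta)$, where the Riemannian gradient is characterized by $g^W_\theta(\operatorname{grad}_{g^W}\mathcal{F}(\theta),v)=d\mathcal{F}_\theta(v)$ for all $v\in T_\theta\Theta$. Since $\Theta\subseteq\mathbb{R}^d$ and $d\mathcal{F}_\theta(v)=\langle\nabla_\theta\mathcal{F}(\theta),v\rangle$ in the Euclidean inner product, this reads $G_W(\theta)\operatorname{grad}_{g^W}\mathcal{F}(\theta)=\nabla_\theta\mathcal{F}(\theta)$, hence $\operatorname{grad}_{g^W}\mathcal{F}(\theta)=G_W(\theta)^{\dd}\nabla_\theta\mathcal{F}(\theta)$; under the standing assumption $\operatorname{rank}(J_\theta p(\theta))=d$ the matrix $G_W(\theta)$ is positive definite, so the pseudo-inverse here is the ordinary inverse. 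It then remains to compute $\nabla_\theta\mathcal{F}(\theta)$ for $\mathcal{F}(\theta)=\operatorname{D}_{\operatorname{KL}}(p(\theta)\|q)$ (equivalently, up to an additive constant when $q$ is uniform, the negative Boltzmann--Shannon entropy of $p(\theta)$), and to substitute $G_W(\theta)=J_\theta p(\theta)^{\ts}L(p(\theta))^{\dd}J_\theta p(\theta)$.

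Next I would apply the chain rule. Writing $\mathcal{F}=E\circ p$ with $E(p)=\sum_{i=1}^n p_i\log(p_i/q_i)$, one has $\nabla_\theta\mathcal{F}(\theta)=J_\theta p(\theta)^{\ts}\nabla_pE(p(\theta))$, and $\nabla_pE(p)=(\log(p_i/q_i)+1)_{i=1}^n=\log\frac{p}{q}+\mathbf{1}$, where $\mathbf{1}=(1,\dots,1)^{\ts}$ and $\log\frac{p}{q}$ denotes the vector $(\log(p_i/q_i))_{i=1}^n$.

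The one point that needs an actual argument rather than pure bookkeeping is that the constant direction is annihilated by the Jacobian transpose: differentiating the normalization identity $\sum_{i=1}^n p_i(\theta)=1$ in $\theta$ gives $\mathbf{1}^{\ts}J_\theta p(\theta)=0$, i.e.\ $J_\theta p(\theta)^{\ts}\mathbf{1}=0$. Hence the ``$+\mathbf{1}$'' term drops out (and, when $q$ is uniform, so does $\log q$, which is then a multiple of $\mathbf{1}$), leaving $\nabla_\theta\mathcal{F}(\theta)=J_\theta p(\theta)^{\ts}\log\frac{p(\theta)}{q}$. Substituting this and the expression for $G_W(\theta)$ into $\dot\theta=-G_W(\theta)^{\dd}\nabla_\theta\mathcal{F}(\theta)$ gives exactly \eqref{FPE2}.

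The computation is routine; the only delicate points are (i) justifying the passage from the abstract Riemannian gradient to $G_W(\theta)^{\dd}\nabla_\theta\mathcal{F}$, which uses the full-rank hypothesis on $J_\theta p(\theta)$ so that $g^W$ is a genuine positive-definite metric and the pseudo-inverse in \eqref{FPE2} coincides with the inverse, and (ii) being careful about what ``negative Boltzmann--Shannon entropy'' means here: the identity $J_\theta p(\theta)^{\ts}\mathbf{1}=0$ is precisely what makes the gradient flows of the negative entropy, of the free energy, and of the relative entropy agree (up to the role of $\log q$), reconciling the name of the functional with the appearance of the reference measure $q$ in \eqref{FPE2}. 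I do not expect any obstacle beyond this bookkeeping. One could alternatively derive the same formula from the fact that $p\colon(\Theta,g^W)\to(\mathcal{P}_+(I),g)$ is an isometric embedding, by pulling back the tangential projection of $\operatorname{grad}_g\operatorname{D}_{\operatorname{KL}}(\cdot\|q)$, but the direct metric-tensor computation above is shorter.
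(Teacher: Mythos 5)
Your proposal is correct and follows essentially the same route as the paper: write the Riemannian gradient as $G_W(\theta)^{\dd}\nabla_\theta\operatorname{D}_{\operatorname{KL}}(p(\theta)\|q)$, compute the Euclidean gradient by the chain rule to get $J_\theta p(\theta)^{\ts}(\log\frac{p(\theta)}{q}+\vec{1})$, and kill the constant term via $J_\theta p(\theta)^{\ts}\vec{1}=0$ from the normalization $\sum_i p_i(\theta)=1$. The extra remarks on the pseudo-inverse versus inverse and on the naming of the functional are fine but not needed beyond what the paper does.
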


\begin{proof}
The gradient flow of entropy on $(\Theta, g_W)$ satisfies 
\begin{equation*}
\begin{split}
\frac{d\theta}{dt}=&-\operatorname{grad}_W\operatorname{D}_{\operatorname{KL}}( p(\theta)\| q)\\
=&-G_W(\theta)^{\dd}\nabla_\theta \operatorname{D}_{\operatorname{KL}}( p(\theta)\|q)\\
=&-\big(J_\theta  p(\theta)^{\ts}L( p(\theta))^{\dd}J_\theta  p(\theta)\big)^{\dd}J_\theta  p(\theta)^{\ts}(\log \frac{ p(\theta)}{q}+\vec{1}),
\end{split}
\end{equation*}
where $\nabla_\theta$ represents the Euclidean gradient operator, 
$\log \frac{ p(\theta)}{q}=(\log \frac{ p_i(\theta)}{q_i})_{i=1}^n$ and $\vec{1}=(1,\cdots, 1)\in\mathbb{R}^n$. Since $ p(\theta)^{\ts}\vec{1}=1$, we have $J_\theta p(\theta)^{\ts}\vec{1}=\vec{0}$. This completes the proof. 
\end{proof}
\begin{remark}
Consider the full probability set with continuous sample space $\Omega$. Denote the probability $p_i(t)$ by a density $\rho(t,x)\in \mathcal{P}(\Omega)$, then \eqref{FPE2} recovers the FPE \eqref{FPE}. 
\end{remark}

We next study the convergence properties of the Fokker-Planck equation on parameter space. In other words, how fast does the solution of \eqref{FPE2} converge to its equilibrium? As in the full probability space, we define the concept of Ricci curvature lower bound on parameter space to give the bound of the convergence rate for \eqref{FPE2}. 

\section{Ricci curvature lower bound on parameter space}
\label{sec4}

This section contains the main contributions of this paper. 
We define the Ricci curvature lower bound on parameter space and prove equivalent conditions for this definition, which connect information geometry and Wasserstein geometry. 
In addition, we present several information functional inequalities on parameter space. 
Finally, we give a simple guide for computing these quantities in practice. 

\subsection{Ricci curvature lower bound on parameter space}

\begin{definition}\label{def}
We say $(\Theta, I, \p)$ has the Ricci curvature lower bound $\kappa\in \mathbb{R}$ if for any constant speed geodesic $\theta_t$, $t\in[0,1]$, connecting $\theta_0$, $\theta_1$ in $(\Theta, g^W)$, it holds that 
\begin{equation*}
\operatorname{D}_{\operatorname{KL}}( p(\theta_t)\|q) \leq (1-t)\operatorname{D}_{\operatorname{KL}}( p(\theta_0)\|q) +t\operatorname{D}_{\operatorname{KL}}( p(\theta_1)\|q)-\frac{\kappa}{2}t(1-t)d_W(\theta_0,\theta_1)^2.
\end{equation*}
In this case we also write 
\begin{equation*}
\operatorname{Ric}(\Theta, I, \p)\geq \kappa.
\end{equation*}

If $(\Theta, g_W)$ forms a compact smooth Riemannian manifold and $ p(\theta)$ is smooth. Then $\kappa$ is the smallest eigenvalue of the Hessian of the KL divergence over the Wasserstein statistical manifold, i.e.
\begin{equation*}
\operatorname{Hess}_W\operatorname{D}_{\operatorname{KL}}( p(\theta)\|q)\succeq \kappa G_W(\theta),
\end{equation*}
for any $\theta\in\Theta$. 
\end{definition}
{Definition \ref{def} is based on the definition of geodesic convexity in geometry. 
It is a more general definition than the one in terms of the Hessian operator begin bounded below by $\kappa$.  
The reason is as follows. 
On the one hand,  the probability set is a manifold with boundary. 
Suitable regularity studies are needed to take care of the boundary when using 
the Hessian operator \cite{LiG}. 
On the other hand, not all parameterizations $ p(\theta)$ are twice differentiable. 
} 
 
Definition~\ref{def} shares the same spirit of Lott-Strum-Villani and Erbar-Maas. 
If $ p(\theta)=\mathcal{P}(I)$ is the whole probability simplex, then $\operatorname{Ric}(\Theta, I, \p)$ is the Ricci curvature bound on discrete sample space. 
Our definition extends this idea to a statistical manifold. 
In other words, $\operatorname{Ric}(\Theta, I, \p)$ inherits properties from both probability submanifold and Ricci curvature bound on sample space. Note that $\operatorname{Ric}(\Theta, I, \p)$ is different from the Ricci curvature on $(\Theta, g^W)$, in which the former represents how the changes ratio KL divergence takes effect on the parameterized sample space, while the later reflects the curvature on the set of probability itself.

 We next given an equivalent condition for Definition~\ref{def}. It naturally connects Ricci curvature (R), Information geometry (I) and Wasserstein geometry (W). We call it Ricci-Information-Wasserstein (RIW) condition. 
 
\begin{theorem}[RIW condition]\label{RIWthm}
Assume $\Theta$ is a compact set. $\operatorname{Ric}(\Theta, I, \p)\geq \kappa$ holds if and only if for any $\theta\in \Theta$,
\begin{equation}\label{RIW}
G_F(\theta)+\sum_{a\in I}\Big(d_{\theta\theta}  p_a(\theta)\log\frac{p_a(\theta)}{q}- \Gamma^{W,a}(\theta)\frac{d}{d_{\theta_a}} \operatorname{D}_{\operatorname{KL}}( p(\theta)\|q)\Big)\succeq \kappa G_W(\theta),
\end{equation}
where $G_F(\theta)=(G_F(\theta)_{ab})_{1\leq a,b\leq d}$ is the Fisher-Rao metric tensor 
\begin{equation}\label{Fisher-Rao}
G_{F}(\theta)_{ab}=\sum_{i\in I}\frac{d\log p_i(\theta)}{d\theta_a}\frac{d\log p_i(\theta)}{d\theta_b} p_i(\theta),
\end{equation}
$\Gamma^{W,k}=(\Gamma^{W,k}_{ij})_{1\leq i, j\leq n}$ is the Wasserstein Christoffel symbol
\begin{equation*}
\Gamma^{W,k}_{ij}=\frac{1}{2}\sum_{l=1}^n(G_{W,kl})^{-1}\Big(\nabla_{\theta_{i}}G_{W,jl}+\nabla_{\theta_{j}}G_{W,il}-\nabla_{\theta_{l}}G_{W,ij}\Big),
\end{equation*}
and $G_W(\theta)$ is the Wasserstein metric tensor defined in \eqref{GW}. 
\end{theorem}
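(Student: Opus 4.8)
The plan is to establish the equivalence by the standard route: geodesic $\lambda$-convexity of a function on a Riemannian manifold is equivalent to a lower bound on its Hessian, and then to compute that Hessian explicitly in local coordinates on $\Theta$, where the Wasserstein geodesic-convexity condition of Definition~\ref{def} gets translated into the matrix inequality \eqref{RIW}. Since $\Theta$ is compact and we may work locally, the first reduction is routine: along any constant-speed geodesic $\theta_t$ in $(\Theta,g^W)$, set $h(t)=\operatorname{D}_{\operatorname{KL}}(p(\theta_t)\|q)$; then the defining inequality in Definition~\ref{def} is precisely the statement that $h(t)-\tfrac{\kappa}{2}t(1-t)d_W(\theta_0,\theta_1)^2$ is convex in $t$ for all geodesics, which (using that $t(1-t)$ has second derivative $-2$ and $d_W(\theta_0,\theta_1)^2$ is the constant geodesic energy) is equivalent to $\ddot h(t)\ge \kappa\, g^W_{\theta_t}(\dot\theta_t,\dot\theta_t)$ for all geodesics. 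Because every tangent vector at every point is the initial velocity of some geodesic, this is exactly $\operatorname{Hess}_W\operatorname{D}_{\operatorname{KL}}(p(\theta)\|q)\succeq \kappa\, G_W(\theta)$ pointwise.

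The core of the proof is therefore the coordinate computation of $\operatorname{Hess}_W\operatorname{D}_{\operatorname{KL}}(p(\theta)\|q)$. I would use the standard formula for the Riemannian Hessian of a scalar function $f$ on $(\Theta,g^W)$:
\begin{equation*}
(\operatorname{Hess}_W f)_{ab}=\partial_{\theta_a}\partial_{\theta_b}f-\sum_{k}\Gamma^{W,k}_{ab}\,\partial_{\theta_k}f,
\end{equation*}
applied to $f(\theta)=\operatorname{D}_{\operatorname{KL}}(p(\theta)\|q)=\sum_i p_i(\theta)\log\frac{p_i(\theta)}{q_i}$. The key sub-step is to differentiate $f$ twice in the Euclidean sense. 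Writing $\partial_{\theta_a}f=\sum_i \partial_{\theta_a}p_i(\theta)\big(\log\frac{p_i(\theta)}{q_i}+1\big)$ and differentiating once more, one finds
\begin{equation*}
\partial_{\theta_a}\partial_{\theta_b}f=\sum_i \frac{\partial_{\theta_a}p_i(\theta)\,\partial_{\theta_b}p_i(\theta)}{p_i(\theta)}+\sum_i \partial_{\theta_a}\partial_{\theta_b}p_i(\theta)\Big(\log\frac{p_i(\theta)}{q_i}+1\Big).
\end{equation*}
The first sum is exactly the Fisher--Rao metric tensor $G_F(\theta)_{ab}$ in \eqref{Fisher-Rao} (after rewriting $\partial_{\theta_a}p_i/p_i=\partial_{\theta_a}\log p_i$), and the ``$+1$'' contributions in the second sum vanish because $\sum_i\partial_{\theta_a}\partial_{\theta_b}p_i(\theta)=\partial_{\theta_a}\partial_{\theta_b}\big(\sum_i p_i(\theta)\big)=0$. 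That leaves the $d_{\theta\theta}p_a(\theta)\log\frac{p_a(\theta)}{q}$ term. Combining with the Christoffel correction $-\sum_k\Gamma^{W,k}_{ab}\partial_{\theta_k}f$ and recognizing $\partial_{\theta_k}f=\frac{d}{d\theta_k}\operatorname{D}_{\operatorname{KL}}(p(\theta)\|q)$ yields precisely the left-hand side of \eqref{RIW}, with $\Gamma^{W}$ the Wasserstein Christoffel symbols (computed from $G_W$ in \eqref{GW} via the usual Levi-Civita formula). The inequality $\operatorname{Hess}_W\operatorname{D}_{\operatorname{KL}}\succeq\kappa G_W$ is then literally \eqref{RIW}.

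I expect the main obstacle to be handling the regularity/smoothness issues cleanly, rather than the algebra: Definition~\ref{def} is stated synthetically precisely because $\mathcal{P}(I)$ is a manifold with boundary and $p(\theta)$ need not be twice differentiable, so to invoke the Hessian characterization one must either add a twice-differentiability hypothesis on $p$ (which the compactness of $\Theta$ together with working in the interior $\mathcal{P}_+(I)$ makes natural) or argue via a limiting/approximation procedure. A secondary technical point is justifying that the Euclidean-coordinate Hessian formula with Christoffel symbols genuinely computes the intrinsic $\operatorname{Hess}_W$ — this is standard Riemannian geometry, but one should note that $G_W(\theta)=J_\theta p(\theta)^{\ts}L(p(\theta))^{\dd}J_\theta p(\theta)$ is positive definite under the rank assumption $\operatorname{rank}(J_\theta p(\theta))=d$, so that $\Gamma^{W}$ is well defined. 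Once these regularity matters are dispatched, the equivalence is just the two computations above read in both directions.
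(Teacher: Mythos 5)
Your proposal is correct and follows essentially the same route as the paper: reduce the synthetic geodesic-convexity condition to the pointwise bound $\operatorname{Hess}_W\operatorname{D}_{\operatorname{KL}}(p(\theta)\|q)\succeq\kappa G_W(\theta)$, then compute the Hessian along constant-speed geodesics in coordinates as $d_{\theta\theta}\operatorname{D}_{\operatorname{KL}}-\sum_k\Gamma^{W,k}\,\partial_{\theta_k}\operatorname{D}_{\operatorname{KL}}$, identifying the Fisher--Rao term $G_F$ and the $\sum_a d_{\theta\theta}p_a\log\frac{p_a}{q}$ term exactly as in the paper's proof (which likewise uses $\sum_i p_i\,d_\theta\log p_i=0$ and the normalization $\sum_i p_i=1$ to remove the constant contributions). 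Your added remarks on twice-differentiability of $p$ and positive definiteness of $G_W$ under the rank assumption are consistent with the smoothness caveats the paper states after Definition~\ref{def}.
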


\begin{proof}
Let $\theta_t$ be a constant speed geodesic, i.e. $\ddot\theta_t+\Gamma^W(\dot\theta_t, \dot\theta_t)=0$ with $\theta_0=\theta\in \Theta$ and $\dot\theta_0=a\in T_\theta\Theta$. 
Consider the Taylor expansion
$$
\operatorname{D}_{\operatorname{KL}}( p(\theta_t)\|q)
=\operatorname{D}_{\operatorname{KL}}( p(\theta)\|q)
+\left.\frac{d}{dt}\right|_{t=0}\operatorname{D}_{\operatorname{KL}}( p(\theta_t)\|q)t
+\frac{1}{2}\left.\frac{d^2}{dt^2}\right|_{t=0}\operatorname{D}_{\operatorname{KL}}( p(\theta_t)\|q)t^2+o(t^2). 
$$
Then the Hessian operator on Riemannian manifold $(\Theta, g^W)$ 
forms 
\begin{equation*}
\begin{split}
\operatorname{Hess}_W\operatorname{D}_{\operatorname{KL}}( p(\theta_t)\|q)(\dot\theta_t,\dot\theta_t)
=&\frac{d^2}{dt^2}D_\textrm{KL}( p(\theta_t)\|q)\\
=&\frac{d}{dt}(d_\theta D_\textrm{KL}( p(\theta_t)\|q)^{\ts}\dot\theta_t)\\
=&\dot\theta_t^{\ts}d_{\theta\theta}\operatorname{D}_{\operatorname{KL}}( p(\theta_t)\|q)\dot\theta_t-d_\theta \operatorname{D}_{\operatorname{KL}}( p(\theta_t)\|q)^{\ts}\Gamma^W(\dot\theta_t, \dot\theta_t)\\
=&\dot\theta_t^{\ts}d_{\theta\theta}\operatorname{D}_{\operatorname{KL}}( p(\theta_t)\|q)\dot\theta_t-\dot\theta_t^{\ts}(\sum_{k\in I}\frac{d}{d\theta_k} \operatorname{D}_{\operatorname{KL}}( p(\theta_t)\|q)\Gamma^{W,k})\dot\theta_t.
\end{split}
\end{equation*}
In addition, 
\begin{equation*}
\begin{split}
d_\theta\operatorname{D}_{\operatorname{KL}}( p(\theta_t)\|q)=&\sum_{i=1}^n\Big(d_\theta p_i(\theta_t)\log p_i(\theta_t)+ p_i(\theta_t)d_\theta\log p_i(\theta_t)-d_\theta p_i(\theta_t)\log q_i\Big)\\
=&\sum_{i=1}^n\Big(d_\theta p_i(\theta_t)\log p_i(\theta_t)-d_\theta p_i(\theta_t)\log q_i\Big),
\end{split}
\end{equation*}
where $\sum_{i=1}^n p_i(\theta_t)d_\theta\log p_i(\theta_t)=\sum_{i=1}^n p_i(\theta_t)\frac{1}{ p_i(\theta_t)}J_\theta p_i(\theta_t)=0$, since $\sum_{i=1}^np_i(\theta)=1$.
Thus
\begin{equation*}
\begin{split}
d_{\theta\theta}\operatorname{D}_{\operatorname{KL}}( p(\theta_t)\|q)=&\sum_{i=1}^nd_{\theta\theta} p_i(\theta_t)\log\frac{ p_i(\theta_t)}{q}+\sum_{i=1}^n\frac{1}{ p_i(\theta_t)}d_\theta  p_i(\theta_t)d_\theta p_i(\theta_t)^{\ts}\\
=&\sum_{i=1}^nd_{\theta\theta} p_i(\theta_t)\log\frac{ p_i(\theta_t)}{q}+G_F(\theta),
\end{split}
\end{equation*}
where $G_F$ denotes the Fisher-Rao metric tensor: 
$G_F(\theta_t)=\sum_{i=1}^n\frac{1}{ p_i(\theta_t)}d_\theta  p_i(\theta_t)d_\theta p_i(\theta_t)^{\ts}=\sum_{i=1}^nd_\theta\log p_i(\theta_t)d_\theta\log p_i(\theta_t)^{\ts} p_i(\theta)$, with the fact $\frac{1}{ p_i(\theta_t)}d_\theta  p_i(\theta_t)=d_\theta\log p_i(\theta_t)$.

Thus $\operatorname{Hess}_W\operatorname{D}_{\operatorname{KL}}( p(\theta)\|q)\succeq \kappa G_W(\theta)$ is equivalent to~\eqref{RIW}. This concludes the proof.
\end{proof}
\begin{remark}
If we replace $I$ by the continuous sample space $(\Omega, g^\Omega)$ and consider the full probability simplex, 
then RIW condition~\eqref{RIW} is equivalent to the integral version of Bakery-Emery condition. 
See details in~\cite[Proposition~19]{LiG}.
\end{remark}

\subsection{Entropy dissipation on parameter space}
With the Ricci curvature lower bound in hand, we can prove the following convergence properties of Fokker-Planck equations on parameter space. 

\begin{proposition}[Bakery-Emery condition on parameter space]
	\label{thm}
Assume $\Theta$ is a compact set. If $\operatorname{Ric}(\Theta, I, \p)\geq \kappa>0$, then there exists a unique equilibrium $\theta^\ast\in \Theta$, with 
\begin{equation*}
\theta^\ast=\arg\min_{\theta\in \Theta}\operatorname{D}_{\operatorname{KL}}( p(\theta)\|q).
\end{equation*}
In addition, for any initial condition $\theta_0\in \Theta$, the solution $\theta(t)$ of \eqref{FPE} converges to $\theta^\ast$ exponentially fast, with 
\begin{equation}\label{expon}
\operatorname{D}_{\operatorname{KL}}( p(\theta_t)\|q)-\operatorname{D}_{\operatorname{KL}}( p(\theta^\ast)\|q)\leq e^{-2\kappa t}\Big(\operatorname{D}_{\operatorname{KL}}( p(\theta_0)\|q)-\operatorname{D}_{\operatorname{KL}}( p(\theta^\ast)\|q)\Big),\quad \text{for all $t$}.
\end{equation}
\end{proposition}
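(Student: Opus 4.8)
The plan is to run the classical Otto--Villani / Bakry--\'Emery argument for gradient flows of $\kappa$-geodesically convex functionals, here on the Wasserstein statistical manifold $(\Theta,g^W)$ with $\operatorname{D}_{\operatorname{KL}}(p(\cdot)\|q)$ in the role of the entropy. Concretely, three steps: (i) deduce existence and uniqueness of the minimizer $\theta^\ast$ from compactness of $\Theta$ together with the strict geodesic convexity of Definition~\ref{def}; (ii) prove an entropy--entropy-dissipation inequality $\operatorname{D}_{\operatorname{KL}}(p(\theta)\|q)-\operatorname{D}_{\operatorname{KL}}(p(\theta^\ast)\|q)\le\tfrac{1}{2\kappa}\,g^W_\theta\!\big(\operatorname{grad}_W\operatorname{D}_{\operatorname{KL}},\operatorname{grad}_W\operatorname{D}_{\operatorname{KL}}\big)$; (iii) feed this into Gr\"onwall's inequality along the flow \eqref{FPE2}. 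For (i): since $\Theta$ is compact, $q\in\mathcal{P}_+(I)$, and $\theta\mapsto p(\theta)\in\mathcal{P}_+(I)$ is continuous, $\theta\mapsto\operatorname{D}_{\operatorname{KL}}(p(\theta)\|q)$ is continuous and attains its minimum at some $\theta^\ast$; if $\theta^\ast\neq\theta^{\ast\ast}$ were two minimizers, joining them by a constant-speed geodesic of $(\Theta,g^W)$ and applying Definition~\ref{def} with $\kappa>0$ would make the midpoint value strictly below the minimum, a contradiction.

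For (ii): fix $\theta\in\Theta$ and let $\gamma_s$, $s\in[0,1]$, be a constant-speed geodesic of $(\Theta,g^W)$ with $\gamma_0=\theta$ and $\gamma_1=\theta^\ast$, so that $g^W_\theta(\dot\gamma_0,\dot\gamma_0)=d_W(\theta,\theta^\ast)^2$. Inserting $\gamma_s$ into the inequality of Definition~\ref{def}, dividing by $s$, and letting $s\downarrow 0$ gives
\[
g^W_\theta\!\big(\operatorname{grad}_W\operatorname{D}_{\operatorname{KL}}(p(\theta)\|q),\,\dot\gamma_0\big)\le \operatorname{D}_{\operatorname{KL}}(p(\theta^\ast)\|q)-\operatorname{D}_{\operatorname{KL}}(p(\theta)\|q)-\tfrac{\kappa}{2}\,d_W(\theta,\theta^\ast)^2 .
\]
Writing $r=d_W(\theta,\theta^\ast)$ and bounding the left-hand side below by $-\|\operatorname{grad}_W\operatorname{D}_{\operatorname{KL}}(p(\theta)\|q)\|_{g^W}\,r$ via Cauchy--Schwarz, we obtain $\operatorname{D}_{\operatorname{KL}}(p(\theta)\|q)-\operatorname{D}_{\operatorname{KL}}(p(\theta^\ast)\|q)\le\|\operatorname{grad}_W\operatorname{D}_{\operatorname{KL}}(p(\theta)\|q)\|_{g^W}\,r-\tfrac{\kappa}{2}r^2$, and maximizing this quadratic over $r\ge 0$ yields the dissipation inequality. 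When $p$ is twice differentiable one may alternatively work from the Hessian form of Definition~\ref{def}/Theorem~\ref{RIWthm}, differentiating $t\mapsto\operatorname{D}_{\operatorname{KL}}(p(\theta_t)\|q)$ twice along \eqref{FPE2}, using $\frac{d^2}{dt^2}\operatorname{D}_{\operatorname{KL}}(p(\theta_t)\|q)=2\operatorname{Hess}_W\operatorname{D}_{\operatorname{KL}}(\dot\theta_t,\dot\theta_t)\ge-2\kappa\frac{d}{dt}\operatorname{D}_{\operatorname{KL}}(p(\theta_t)\|q)$, and integrating.

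For (iii): let $\theta_t$ solve \eqref{FPE2}, i.e.\ $\dot\theta_t=-\operatorname{grad}_W\operatorname{D}_{\operatorname{KL}}(p(\theta_t)\|q)$, and set $\Lambda(t)=\operatorname{D}_{\operatorname{KL}}(p(\theta_t)\|q)-\operatorname{D}_{\operatorname{KL}}(p(\theta^\ast)\|q)\ge 0$. Then $\dot\Lambda(t)=-g^W_{\theta_t}(\operatorname{grad}_W\operatorname{D}_{\operatorname{KL}},\operatorname{grad}_W\operatorname{D}_{\operatorname{KL}})$, and step (ii) evaluated at $\theta=\theta_t$ gives $\Lambda(t)\le-\tfrac{1}{2\kappa}\dot\Lambda(t)$, i.e.\ $\dot\Lambda(t)\le-2\kappa\Lambda(t)$. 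Gr\"onwall's inequality then produces \eqref{expon}; in particular $\Lambda(t)\to 0$, hence $p(\theta_t)\to p(\theta^\ast)$.

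The convexity chain above is routine; the real work, and the main obstacle, is the analytic and differential-geometric bookkeeping. One must ensure that \eqref{FPE2} has a global-in-time solution that stays inside $\Theta$ --- the assumption that $\Theta$ is compact is in tension with the openness used to define $g^W$ in Section~\ref{sec:WSM}, so in practice this means the orbit remains in a compact set bounded away from $\partial\mathcal{P}(I)$, where $G_W$ degenerates --- that any two points of $\Theta$ are joined by a constant-speed minimizing geodesic of $(\Theta,g^W)$ lying in $\Theta$ (needed in (i) and (ii)), and that the first-variation computation defining $\operatorname{grad}_W\operatorname{D}_{\operatorname{KL}}$ and the differentiations of $t\mapsto\operatorname{D}_{\operatorname{KL}}(p(\theta_t)\|q)$ are legitimate. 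These rely on the standing regularity hypotheses on $p$ and on completeness-type properties of $(\Theta,g^W)$, and are where the care must go.
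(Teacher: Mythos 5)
Your proposal is correct, and its core differs from the paper's argument in how the key dissipation inequality is obtained. The paper works with the Hessian form of the curvature bound: along the flow \eqref{FPE2} it computes $\frac{d}{dt}\operatorname{D}_{\operatorname{KL}}=-\|\operatorname{grad}_W\operatorname{D}_{\operatorname{KL}}\|_{g^W}^2$ and $\frac{d^2}{dt^2}\operatorname{D}_{\operatorname{KL}}=2\operatorname{Hess}_W\operatorname{D}_{\operatorname{KL}}(\operatorname{grad}_W\operatorname{D}_{\operatorname{KL}},\operatorname{grad}_W\operatorname{D}_{\operatorname{KL}})$, deduces $\frac{d^2}{dt^2}\operatorname{D}_{\operatorname{KL}}\ge-2\kappa\frac{d}{dt}\operatorname{D}_{\operatorname{KL}}$, integrates this over $[t,\infty)$ (using that the gradient vanishes in the limit) to get $\frac{d}{dt}\big[\operatorname{D}_{\operatorname{KL}}(p(\theta_t)\|q)-\operatorname{D}_{\operatorname{KL}}(p(\theta^\ast)\|q)\big]\le-2\kappa\big[\operatorname{D}_{\operatorname{KL}}(p(\theta_t)\|q)-\operatorname{D}_{\operatorname{KL}}(p(\theta^\ast)\|q)\big]$, and then applies Gr\"onwall; this is exactly the alternative you sketch in passing. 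Your main route instead derives the entropy--dissipation (log-Sobolev-type) inequality directly from the synthetic Definition~\ref{def}: first variation of the convexity inequality along a geodesic from $\theta$ to $\theta^\ast$, Cauchy--Schwarz, and optimization of the resulting quadratic in $r=d_W(\theta,\theta^\ast)$ --- effectively the HWI$\Rightarrow$LSI chain that the paper only develops later, in Proposition~\ref{FI}. What your route buys is that it uses only the geodesic-convexity formulation (no twice-differentiability of $t\mapsto\operatorname{D}_{\operatorname{KL}}(p(\theta_t)\|q)$, no Hessian, no boundary term at $t=\infty$), and it gives uniqueness of $\theta^\ast$ by a clean midpoint argument rather than by appeal to ``classical gradient-flow theory''; its price is that you must know that any point of $\Theta$ is joined to $\theta^\ast$ by a constant-speed minimizing geodesic staying in $\Theta$. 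The paper's route avoids that geodesic-existence issue but implicitly assumes smoothness, convergence of the trajectory, and decay of the gradient at infinity. Your closing list of analytic caveats (global-in-time solvability of \eqref{FPE2} inside $\Theta$, the compact-versus-open tension, degeneracy of $G_W$ near $\partial\mathcal{P}(I)$) flags precisely the points the paper's short proof glosses over, so nothing essential is missing from your argument.
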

\begin{remark}{This result will apply for any geometry defined on $\Theta$, whenever $\kappa$ is the smallest eigenvalue of the corresponding Hessian operator of the divergence function.}
\end{remark}
\begin{proof}
The proof comes from the classical study of gradient flow in Riemannian manifold $(\Theta, g_W)$. Since $\operatorname{Hess}_W\operatorname{D}_{\operatorname{KL}}( p(\theta)\|q)\geq \kappa>0$, the $\operatorname{D}_{\operatorname{KL}}( p(\theta)\|q)$ is $\kappa$-geodesics convex in $(\Theta, g_W)$. Thus $\theta(t)$ converges to the unique equilibrium $\theta^\ast$, which is also the unique minimizer of KL divergence. 

We next investigate how fast $\theta(t)$ converges to $\theta^\ast$. 
The speed of convergence is obtained by comparing 
the first and second derivatives of the KL divergence w.r.t.~time $t$ along~\eqref{FPE}. 
We have 
\begin{equation*}
\frac{d}{dt}\operatorname{D}_{\operatorname{KL}}( p(\theta_t)\|q)=-g_W(\operatorname{grad}_W\operatorname{D}_{\operatorname{KL}}( p(\theta_t)\|q), \operatorname{grad}_W\operatorname{D}_{\operatorname{KL}}( p(\theta_t)\|q)\big),
\end{equation*}
and 
\begin{equation*}
\frac{d^2}{dt^2}\operatorname{D}_{\operatorname{KL}}( p(\theta_t)\|q)=2\operatorname{Hess}_W\operatorname{D}_{\operatorname{KL}}( p(\theta_t)\|q)\big(\operatorname{grad}_W\operatorname{D}_{\operatorname{KL}}( p(\theta_t)\|q), \operatorname{grad}_W\operatorname{D}_{\operatorname{KL}}( p(\theta_t)\|q)\big).
\end{equation*}
From $\operatorname{Ric}(\Theta, I, \p)\geq \kappa>0$, then $\operatorname{Hess}_W\operatorname{D}_{\operatorname{KL}}( p(\theta)\|q)\geq \kappa>0$, i.e. 
\begin{equation}\label{C1} 
\frac{d^2}{dt^2}\operatorname{D}_{\operatorname{KL}}( p(\theta_t)\|q) \geq -2\kappa\frac{d}{dt}\operatorname{D}_{\operatorname{KL}}( p(\theta_t)\|q),\quad\text{for all  $t\geq 0$}.
\end{equation}
Then by integrating the above formula over $[t,+\infty)$, one obtains 
\begin{equation*}
\frac{d}{dt}[\operatorname{D}_{\operatorname{KL}}( p(\theta^\ast)\|q)-\operatorname{D}_{\operatorname{KL}}( p(\theta_t)\|q)]\geq -2\kappa[\operatorname{D}_{\operatorname{KL}}( p(\theta^\ast)\|q)-\operatorname{D}_{\operatorname{KL}}( p(\theta_t)\|q)].
\end{equation*}
Proceed with the Gr\"onwall's inequality, the result is proved. 
\end{proof}
\subsection{Functional inequalities on parameter space}
In literature \cite{OV}, the convergence rate of FPE is used to prove several functional inequalities, including Log-Sobolev, Talagrand and HWI inequalities. 
The HWI inequality is a relation between the relative entropy (H), Wasserstein metric (W), relative Fisher information functional ($\mathcal{I}$). 
We shall derive the counterparts of these inequalities on parameter space.  

Here the Log-Sobolev inequality describes a relationship between relative entropy and relative Fisher information functional on parameter space. Here the relative Fisher information functional is defined by
\begin{equation}\label{RF}
\begin{split}
\mathcal{I}( p(\theta)\|q):=&g_W(\operatorname{grad}_W\operatorname{D}_{\operatorname{KL}}( p(\theta_t)\|q), \operatorname{grad}_W\operatorname{D}_{\operatorname{KL}}( p(\theta_t)\|q)).
\end{split}
\end{equation}
 In particular, we formulate \eqref{RF} as follows:
\begin{equation*}
\begin{split}
\mathcal{I}( p(\theta)\|q)=&\log \frac{ p(\theta)}{q}^{\ts}J_\theta( p(\theta))\big(J_\theta  p(\theta)^{\ts}L( p(\theta))^{\dd}J_\theta  p(\theta)\big)^{\dd}J_\theta  p(\theta)^{\ts}\log \frac{ p(\theta)}{q}\\
=&\Big(\textrm{Proj}_{\theta}\log \frac{ p(\theta)}{q}\Big)^{\ts}L( p(\theta))\Big(\textrm{Proj}_\theta\frac{ p(\theta)}{q}\Big)\\
=& \sum_{i=1}^n \sum_{j\in N(i)}\frac{1}{2d_i}\omega_{ij}\Big((\textrm{Proj}_{\theta}\log \frac{ p(\theta)}{q})_i-(\textrm{Proj}_{\theta}\log\frac{ p(\theta)}{q})_j\Big)^2 p_i(\theta),
\end{split}
\end{equation*}
where $\textrm{Proj}_\theta=(J_\theta p(\theta)^{\ts})^{\dd}J_\theta  p(\theta)^{\ts}\in \mathbb{R}^{n\times n}$ is the projection matrix, which projects the differential operator in full probability into the one in parameter space. We compare \eqref{RF} with the one in continuous sample space and full probability space: 
\begin{equation*}
\mathcal{I}(\rho\|q)=\int_{\Omega}g^\Omega(\nabla \log\frac{\rho}{q}, \nabla \log\frac{\rho}{q})\rho \;dx. 
\end{equation*}
We note that the functional \eqref{RF} is different from the commonly known Fisher information matrix \eqref{Fisher-Rao} in parameter space. It contains the ground metric structure in the sample space, which is inherited from the $L^2$-Wasserstein metric tensor $L(p)^{\dd}$. In other words, when applying the Fisher information in full probability set into parameter space, the following two angles arrive. {Here \eqref{RF} keeps the differential structure of sample space and project the differential of KL divergence into the parameter space, while Fisher information matrix \eqref{Fisher-Rao} replaces the differential structures of sample space to the ones in parameters.}

In the following, we derive inequalities based on \eqref{RF}.
\begin{proposition}[Functional inequalities on parameter space]\label{FI}
Consider a statistical manifold $(\Theta, I, \p)$. The following inequalities hold. 
\begin{itemize}
\item[(i)]If $\operatorname{Ric}(\Theta, I, \p)\geq \kappa>0$, then the Logarithmic Sobolev inequality on parameter space
\begin{equation}\label{LS}
\operatorname{D}_{\operatorname{KL}}( p(\theta)\|q)-\operatorname{D}_{\operatorname{KL}}( p(\theta^\ast)\|q)  \leq \frac{1}{2\kappa}\mathcal{I}( p(\theta)\|q) ,
\end{equation}
holds for any $\theta\in \Theta$. 
\item[(ii)] If $\operatorname{Ric}(\Theta, I, \p)\geq \kappa>0$, then the Talagrand inequality on parameter space
\begin{equation*}
\frac{\kappa}{2}d_W(\theta, \theta^\ast)^2\leq {\operatorname{D}_{\operatorname{KL}}( p(\theta)\|q)-\operatorname{D}_{\operatorname{KL}}( p(\theta^\ast)\|q)} ,
\end{equation*}
holds for any $\theta\in \Theta$. 
\item[(iii)] If $\operatorname{Ric}(\Theta, I, \p)\geq \kappa\in\mathbb{R}$ ($\kappa$ not necessarily positive), then the HWI inequality on parameter space
\begin{equation*}
\operatorname{D}_{\operatorname{KL}}( p(\theta)\|q)-\operatorname{D}_{\operatorname{KL}}( p(\theta^\ast)\|q)\leq \sqrt{\mathcal{I}( p(\theta)\|q)}d_W(\theta,\theta^\ast)-\frac{\kappa}{2}d_W(\theta, \theta^\ast)^2,
\end{equation*}
holds for any $\theta\in \Theta$. 
\end{itemize}
\end{proposition}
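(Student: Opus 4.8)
The plan is to follow the Otto--Villani program, now transplanted from the density manifold to the Wasserstein statistical manifold $(\Theta, g_W)$. The two ingredients are the $\kappa$-geodesic convexity of the KL divergence (Definition~\ref{def}) and the gradient flow~\eqref{FPE2} together with the exponential decay established in Proposition~\ref{thm}. Throughout I will use two identities already recorded in the excerpt: along~\eqref{FPE2} one has $\frac{d}{dt}\operatorname{D}_{\operatorname{KL}}(p(\theta_t)\|q) = -\mathcal{I}(p(\theta_t)\|q)$, and $\mathcal{I}(p(\theta)\|q) = g_W(\operatorname{grad}_W\operatorname{D}_{\operatorname{KL}}(p(\theta)\|q),\operatorname{grad}_W\operatorname{D}_{\operatorname{KL}}(p(\theta)\|q)) = \|\operatorname{grad}_W\operatorname{D}_{\operatorname{KL}}(p(\theta)\|q)\|_{g_W}^2$. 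I work in the interior, where $p$ and the metric are smooth, and invoke compactness of $\Theta$ to get a minimizer $\theta^\ast$ (unique if $\kappa>0$, by Proposition~\ref{thm}) and constant-speed $g_W$-geodesics between prescribed endpoints.

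For (iii), the HWI inequality, I would take a constant-speed geodesic $\theta_t$, $t\in[0,1]$, with $\theta_0=\theta$ and $\theta_1=\theta^\ast$. The function $h(t):=(1-t)\operatorname{D}_{\operatorname{KL}}(p(\theta)\|q)+t\operatorname{D}_{\operatorname{KL}}(p(\theta^\ast)\|q)-\tfrac{\kappa}{2}t(1-t)d_W(\theta,\theta^\ast)^2-\operatorname{D}_{\operatorname{KL}}(p(\theta_t)\|q)$ is nonnegative by Definition~\ref{def} and vanishes at $t=0$, so $h'(0)\ge 0$, which rearranges to $\frac{d}{dt}\big|_{0}\operatorname{D}_{\operatorname{KL}}(p(\theta_t)\|q)\le \operatorname{D}_{\operatorname{KL}}(p(\theta^\ast)\|q)-\operatorname{D}_{\operatorname{KL}}(p(\theta)\|q)-\tfrac{\kappa}{2}d_W(\theta,\theta^\ast)^2$. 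On the other hand $\frac{d}{dt}\big|_{0}\operatorname{D}_{\operatorname{KL}}(p(\theta_t)\|q)=g_W(\operatorname{grad}_W\operatorname{D}_{\operatorname{KL}}(p(\theta)\|q),\dot\theta_0)\ge -\sqrt{\mathcal{I}(p(\theta)\|q)}\,\|\dot\theta_0\|_{g_W}$ by Cauchy--Schwarz, and $\|\dot\theta_0\|_{g_W}=d_W(\theta,\theta^\ast)$ because $\theta_t$ has constant speed on $[0,1]$. Combining the two displays yields (iii), with no sign constraint on $\kappa$.

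For (i), the logarithmic Sobolev inequality with $\kappa>0$, I would simply maximize the right-hand side of (iii) over $r=d_W(\theta,\theta^\ast)\ge 0$, using $\sqrt{\mathcal{I}(p(\theta)\|q)}\,r-\tfrac{\kappa}{2}r^2\le \frac{1}{2\kappa}\mathcal{I}(p(\theta)\|q)$; an alternative is to integrate the differential inequality $\frac{d}{dt}\mathcal{I}(p(\theta_t)\|q)\le -2\kappa\,\mathcal{I}(p(\theta_t)\|q)$, which follows from~\eqref{C1}, against $\operatorname{D}_{\operatorname{KL}}(p(\theta_0)\|q)-\operatorname{D}_{\operatorname{KL}}(p(\theta^\ast)\|q)=\int_0^\infty\mathcal{I}(p(\theta_t)\|q)\,dt$. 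For (ii), the Talagrand inequality, there are two short routes. The direct one: apply Definition~\ref{def} to the constant-speed geodesic from $\theta^\ast$ to $\theta$, divide by $t$, and let $t\to 0^+$; the derivative term is $g_W(\operatorname{grad}_W\operatorname{D}_{\operatorname{KL}}(p(\theta^\ast)\|q),\dot\theta_0)=0$ since $\theta^\ast$ is an interior minimizer, which leaves $\tfrac{\kappa}{2}d_W(\theta,\theta^\ast)^2\le \operatorname{D}_{\operatorname{KL}}(p(\theta)\|q)-\operatorname{D}_{\operatorname{KL}}(p(\theta^\ast)\|q)$ (trivial for $\kappa\le 0$). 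The Otto--Villani route: run~\eqref{FPE2} from $\theta$, set $\psi(t):=\sqrt{\operatorname{D}_{\operatorname{KL}}(p(\theta_t)\|q)-\operatorname{D}_{\operatorname{KL}}(p(\theta^\ast)\|q)}$, use (i) in the form $\mathcal{I}\ge 2\kappa(\operatorname{D}_{\operatorname{KL}}-\operatorname{D}_{\operatorname{KL}}(p(\theta^\ast)\|q))$ to get $\psi'(t)\le -\sqrt{\kappa/2}\,\sqrt{\mathcal{I}(p(\theta_t)\|q)}$, combine with the metric-speed bound $\big|\tfrac{d}{dt}d_W(\theta_t,\theta^\ast)\big|\le \|\dot\theta_t\|_{g_W}=\sqrt{\mathcal{I}(p(\theta_t)\|q)}$, and integrate over $[0,\infty)$, the integral converging by the exponential decay of Proposition~\ref{thm}.

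The main obstacle is not any individual inequality but the analytic bookkeeping surrounding them: that $t\mapsto\operatorname{D}_{\operatorname{KL}}(p(\theta_t)\|q)$ is one-sided differentiable at the endpoints of a geodesic, that a constant-speed geodesic attaining $d_W(\theta,\theta^\ast)$ exists, that $t\mapsto d_W(\theta_t,\theta^\ast)$ is locally Lipschitz with metric derivative no larger than the speed of the curve, and that $\psi$ can be handled on the set where it vanishes. These are standard facts on a smooth (finite-dimensional) Riemannian manifold and are covered by the compactness and smoothness hypotheses already in force, together with the restriction to the interior of the simplex; I would state them as such and cite the Riemannian-geometry and Otto--Villani references rather than reprove them, since the real content of the proposition is the short chain of manipulations above.
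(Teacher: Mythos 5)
Your proposal is correct, and it uses the same basic ingredients as the paper (geodesic convexity from Definition~\ref{def}, the gradient flow \eqref{FPE2}, Cauchy--Schwarz), but the logical route differs in two of the three parts. For (iii) you and the paper do essentially the same thing: the paper Taylor-expands $\operatorname{D}_{\operatorname{KL}}(p(\theta_t)\|q)$ along a least-energy geodesic and bounds the integral remainder by $\operatorname{Hess}_W\succeq\kappa G_W$, while you differentiate the synthetic convexity inequality at $t=0$; both then apply Cauchy--Schwarz to the first-order term. For (i), the paper argues directly along the flow, integrating the Bakry--\'Emery inequality \eqref{C1} over $[0,\infty)$ --- exactly the alternative you sketch --- whereas your primary route deduces the log-Sobolev inequality from HWI by maximizing $r\mapsto\sqrt{\mathcal{I}(p(\theta)\|q)}\,r-\tfrac{\kappa}{2}r^2$; both are valid, yours being the classical HWI$\Rightarrow$LSI implication. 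For (ii), the paper runs the Otto--Villani argument, showing that $\Psi(t)=d_W(\theta,\theta_t)+\sqrt{2/\kappa}\,\sqrt{\operatorname{D}_{\operatorname{KL}}(p(\theta_t)\|q)-\operatorname{D}_{\operatorname{KL}}(p(\theta^\ast)\|q)}$ has nonpositive upper derivative along \eqref{FPE2} (essentially your second route, with $d_W(\theta,\theta_t)$ in place of $d_W(\theta_t,\theta^\ast)$), while your primary route is a shorter, flow-free argument: apply Definition~\ref{def} on the geodesic from $\theta^\ast$ to $\theta$, divide by $t$, and let $t\to0^+$. That argument is sound --- in fact, since $\operatorname{D}_{\operatorname{KL}}(p(\theta_t)\|q)-\operatorname{D}_{\operatorname{KL}}(p(\theta^\ast)\|q)\geq 0$, you do not even need differentiability or the vanishing gradient at $\theta^\ast$; and where you do invoke $\operatorname{grad}_W\operatorname{D}_{\operatorname{KL}}(p(\theta^\ast)\|q)=0$, this is the same implicit interior-minimizer assumption the paper makes when it sets $\mathcal{I}(p(\theta^\ast)\|q)=0$. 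What your ordering buys is brevity and avoidance of the $\limsup$ bookkeeping in the paper's $\Psi$-argument; what the paper's route buys is a proof that stays inside the entropy-dissipation picture of Proposition~\ref{thm}, which is the viewpoint emphasized elsewhere in the paper. The regularity points you defer (existence of a minimizing constant-speed geodesic with $\|\dot\theta_0\|_{g_W}=d_W(\theta,\theta^\ast)$, one-sided differentiability at endpoints) are handled at the same level of informality in the paper, so deferring them with a citation is acceptable.
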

\begin{proof}
Here we mainly follow the heuristic arguments in \cite{OV}. In finite dimensional parameter space, these approaches are rigorous. We demonstrate the proofs for the completeness of paper. 

(i)~The proof follows Proposition \ref{thm}. Consider the Fokker-Planck equation \eqref{FPE} with initial condition $\theta(0)=\theta$. 
The dissipation along gradient flow of entropy gives
\begin{equation}\label{aa}
\begin{split}
\mathcal{I}( p(\theta_t)\|q)=&-\frac{d}{dt}\operatorname{D}_{\operatorname{KL}}( p(\theta_t)\|q)\\
=&g_W\big(\operatorname{grad}_W \operatorname{D}_{\operatorname{KL}}( p(\theta_t)\|q), \operatorname{grad}_W \operatorname{D}_{\operatorname{KL}}( p(\theta_t)\|q)\big). 
\end{split}
\end{equation}
Since \eqref{C1} holds, by integrating over time $t\in [0, \infty)$, we have 
 \begin{equation*}
-\frac{d}{dt}\operatorname{D}_{\operatorname{KL}}( p(\theta_t)\|q)|_{t=0}^\infty\geq -2\kappa[\operatorname{D}_{\operatorname{KL}}( p(\theta^\ast)\|q)-\operatorname{D}_{\operatorname{KL}}( p(\theta_0)\|q)].
\end{equation*}
From \eqref{aa} and $\theta(0)=\theta$, we have
\begin{equation*}
\mathcal{I}( p(\theta)\|q)- \mathcal{I}( p(\theta^\ast)\|q)\leq 2\kappa[\operatorname{D}_{\operatorname{KL}}( p(\theta)\|q)-\operatorname{D}_{\operatorname{KL}}( p(\theta^\ast)\|q)],
\end{equation*}
where we use the fact  $\textrm{grad}_W\operatorname{D}_{\operatorname{KL}}( p(\theta)\|q)=0$, so that $\mathcal{I}( p(\theta^\ast)\|q)=0$. 
It proves the result. 

(ii) Consider $\theta(t)$ satisfy the FPE \eqref{FPE} on parameter space with $\theta(0)=\theta$. Since $\operatorname{Ric}(\Theta, I, \p)\geq \kappa>0$, then $\lim_{t\rightarrow \infty}\theta(t)=\theta^\ast$.
Define 
\begin{equation*}
\Psi(t)=d_W(\theta, \theta(t))+\sqrt{\frac{2}{\kappa}}\sqrt{\operatorname{D}_{\operatorname{KL}}( p(\theta_t)\|q)-\operatorname{D}_{\operatorname{KL}}( p(\theta^\ast)\|q)}.
\end{equation*}
Thus $\Psi(0)=\sqrt{\frac{2}{\kappa}}\sqrt{\operatorname{D}_{\operatorname{KL}}( p(\theta)\|q)-\operatorname{D}_{\operatorname{KL}}( p(\theta^\ast)\|q)}$ and $\Psi(\infty)=\lim_{t\rightarrow \infty}\Psi(t)=d_W(\theta, \theta^\ast)$. 
We claim that $\Psi(t)$ is nondecreasing. If so, then $\Psi(0)\leq \Psi(\infty)$, which proves the result. 

To show $\Psi(t)$ is nondecreasing, we shall prove that 
$$
\frac{d}{dt}^+\Psi(t)=\lim\sup_{h\rightarrow 0+}\frac{\Psi(t+h)-\Psi(t)}{h}\leq 0.
$$
Here we assume $\theta(t)\neq\theta^\ast$, otherwise $\Psi(t+h)=\Psi(t)$ for any $h$, which shows the upper derivative zero. 

On the one hand, by triangle inequality,
$$|d_W(\theta, \theta_t)- d_W(\theta, \theta_{t+h})|\leq d_W(\theta_t, \theta_{t+h}),$$
so that
\begin{equation}\label{a}
\lim\sup_{h\rightarrow 0+}\frac{d_W(\theta_t, \theta_{t+h})}{h}=\sqrt{g_W(\operatorname{grad}_W\operatorname{D}_{\operatorname{KL}}( p(\theta_t)\|q),\operatorname{grad}_W\operatorname{D}_{\operatorname{KL}}( p(\theta_t)\|q))}=\sqrt{\mathcal{I}( p(\theta_t)\|q)}.
\end{equation}

On the other hand, since $\theta(t)\neq \theta^\ast$, then 
\begin{equation*}
\begin{split}
&\sqrt{\frac{2}{\kappa}}\frac{d}{dt}\sqrt{\operatorname{D}_{\operatorname{KL}}( p(\theta_t)\|q)-\operatorname{D}_{\operatorname{KL}}( p(\theta^\ast)\|q)}\\
=&-\frac{g_W(\operatorname{grad}_W\operatorname{D}_{\operatorname{KL}}( p(\theta_t)\|q),\operatorname{grad}_W\operatorname{D}_{\operatorname{KL}}( p(\theta_t)\|q))}{\sqrt{2\kappa (\operatorname{D}_{\operatorname{KL}}( p(\theta_t)\|q)-\operatorname{D}_{\operatorname{KL}}( p(\theta^\ast)\|q))}}\\
=&-\frac{\mathcal{I}( p(\theta_t)\|q)}{\sqrt{2\kappa (\operatorname{D}_{\operatorname{KL}}( p(\theta_t)\|q)-\operatorname{D}_{\operatorname{KL}}( p(\theta^\ast)\|q))}}.
\end{split}
\end{equation*}
 From \eqref{LS}, we have 
\begin{equation}\label{b}
\sqrt{\frac{2}{\kappa}}\frac{d}{dt}\sqrt{\operatorname{D}_{\operatorname{KL}}( p(\theta_t)\|q)-\operatorname{D}_{\operatorname{KL}}( p(\theta^\ast)\|q)}\leq  -\sqrt{\mathcal{I}( p(\theta_t)\|q)}.
\end{equation}
From \eqref{a} and \eqref{b}, we have $\frac{d}{dt}^+\Psi(t)=\lim\sup_{h\rightarrow 0+}\frac{\Psi(t+h)-\Psi(t)}{h}\leq 0$, which finishes the proof.

(iii) From the definition of $\operatorname{Ric}(\Theta, I, \p)\geq \kappa$, then $\operatorname{Hess}_W\operatorname{D}_{\operatorname{KL}}( p(\theta)\|q)\succeq  \kappa G_W$. Denote $\theta_t$ be a geodesic curve of least energy in manifold $(\Theta, g_W)$, joining $\theta_0=\theta$ and $\theta_1=\theta^\ast$. Thus 
\begin{equation*}
d_W(\theta, \theta^\ast)=\sqrt{g_W(\frac{d\theta_t}{dt}, \frac{d\theta_t}{dt})}.
\end{equation*}
From the Taylor expansion on the $(\Theta, g_W)$, we have 
\begin{equation*}
\operatorname{D}_{\operatorname{KL}}( p(\theta^\ast)\|q)=\operatorname{D}_{\operatorname{KL}}( p(\theta)\|q)+\frac{d}{dt}|_{t=0}\operatorname{D}_{\operatorname{KL}}( p(\theta_t)\|q)+\int_0^1(1-t)\frac{d^2}{dt^2}\operatorname{D}_{\operatorname{KL}}( p(\theta_t)\|q)dt.
\end{equation*}
We note that 
\begin{equation*}
\begin{split}
\frac{d}{dt}|_{t=0}\operatorname{D}_{\operatorname{KL}}( p(\theta_t)\|q)=&g_W(\operatorname{grad}_W\operatorname{D}_{\operatorname{KL}}( p(\theta_t)\|q), \frac{d\theta_t}{dt})|_{t=0}\\
\geq&-\sqrt{g_W(\operatorname{grad}_W\operatorname{D}_{\operatorname{KL}}( p(\theta_t)\|q), \operatorname{grad}_W\operatorname{D}_{\operatorname{KL}}( p(\theta_t)\|q))}|_{t=0}\sqrt{g_W(\frac{d\theta_t}{dt}, \frac{d\theta_t}{dt})}|_{t=0}\\
=&-\sqrt{\mathcal{I}( p(\theta)\|q)} d_W(\theta, \theta^\ast),
\end{split}
\end{equation*}
and 
\begin{equation*}
\begin{split}
\int_0^1(1-t)\frac{d^2}{dt^2}\operatorname{D}_{\operatorname{KL}}( p(\theta_t)\|q)dt=&\int_0^1(1-t) g_W(\operatorname{Hess}_W\operatorname{D}_{\operatorname{KL}}( p(\theta_t)\|q)\cdot\frac{d\theta_t}{dt}, \frac{d\theta_t}{dt})dt\\
\geq&\int_0^1\kappa(1-t)  g_W(\frac{d\theta_t}{dt}, \frac{d\theta_t}{dt})dt \\
=&\frac{\kappa}{2}d_W(\theta, \theta^\ast)^2.
\end{split}
\end{equation*}
Combining the above formulas, we prove the result.
\end{proof}

\subsection{Computing the Ricci curvature lower bound and convergence rate}

In this section, we design an algorithm for Ricci curvature lower bound $\kappa$.

We first approximate $\kappa$ by RIW condition in Theorem~\ref{RIWthm}. In other words, we compute formulas for \eqref{RIW} via 
\begin{equation*}
\kappa=\textrm{Smallest eigenvalue of }~G_W(\theta)^{-1}\Big\{G_F(\theta)+\sum_{a\in I}\Big(d_{\theta\theta}  p_a(\theta)\log\frac{p_a(\theta)}{q}- \Gamma^{W,a}(\theta)\frac{d}{d_{\theta_a}} \operatorname{D}_{\operatorname{KL}}( p(\theta)\|q)\Big)\Big\}.
\end{equation*}
where $d_{\theta\theta}  p_a(\theta)$, $\Gamma^{W,a}(\theta)$, $\frac{d}{d{\theta_a}} \operatorname{D}_{\operatorname{KL}}( p(\theta)\|q)$ are computed by numerical differentiation. 

{In practice, we also compute {a uniform convergence rate} $K\geq \kappa$ as the smallest ratio of $\frac{d}{dt}\operatorname{D}_{\operatorname{KL}}( p(\theta_{t})\|q)$ and 
$\frac{d^2}{dt^2}\operatorname{D}_{\operatorname{KL}}( p(\theta_{t})\|q)$ along the gradient flow \eqref{FPE2} for any initial conditions. I.e.
\begin{equation*}
K=\min_{\theta_0\in \Theta}\frac{1}{2T}\frac{\operatorname{D}_{\operatorname{KL}}( p(\theta_{2T})\|q)- 2\operatorname{D}_{\operatorname{KL}}( p(\theta_{T})\|q)+\operatorname{D}_{\operatorname{KL}}( p(\theta_0)\|q)}{\operatorname{D}_{\operatorname{KL}}( p(\theta_T)\|q)- \operatorname{D}_{\operatorname{KL}}( p(\theta_0)\|q)}, 
\end{equation*}
where $T$ is a given short time, $\theta_T$ is the solution of \eqref{expon} with initial condition $\theta_0$.} Whenever $K>0$, it is always the tight bound for functional inequalities in Proposition \ref{FI}. 
\begin{table}[H]\label{Alg}
\begin{tabbing}
aaaaa\= aaa \=aaa\=aaa\=aaa\=aaa=aaa\kill  
   \rule{\linewidth}{0.8pt}\\
   \noindent{\large\bf Convergence rate}\\
     \1 \textbf{Input}: Sample initial conditions $\{\theta_0^s\}_{s=1}^{|S|}$; \\
    \3 Target distribution $q$;\\ 
  \3 A suitable initial step size $h>0$;\\
  \3 A short terminal time $T>0$.\\
  \1 \textbf{Output}: 
  Approximation {$K$} of the uniform convergence rate;\\
    \rule{\linewidth}{0.8pt}\\ 
  \1 \For $s\in \{1,\cdots, |S|\}$\\
  \2 \For $k=1, 2, \ldots, 2T/h$\\
 \3 $\theta^s_{k+1} = \theta^s_k-hG_W(\theta_k^s)^{-1}\nabla_{\theta}\operatorname{D}_{\operatorname{KL}}(p(\theta_k^s)\|q)$ \ ; \\
  \2 \End\\
  \1 \End\\
\1  $K=\min_{s\in \{1,\cdots, |S|\}}\frac{1}{2T}\frac{\operatorname{D}_{\operatorname{KL}}( p(\theta^s_{2T})\|q)- 2\operatorname{D}_{\operatorname{KL}}( p(\theta^s_{T})\|q)+\operatorname{D}_{\operatorname{KL}}( p(\theta^s_0)\|q)}{\operatorname{D}_{\operatorname{KL}}( p(\theta^s_T)\|q)- \operatorname{D}_{\operatorname{KL}}( p(\theta^s_0)\|q)}$.
\\
   \rule{\linewidth}{0.8pt}
\end{tabbing}
\end{table}
\section{Examples}
\label{sec5}

In this section, we illustrate some of the concepts introduced in the previous sections by means of evaluating them on 
a simple class of exponential family models. We illustrate the effects from the choice of the ground metric on sample space in relation to the choice of the statistical model, 
and the relationships between the Ricci curvature lower bound and the rates of convergence in learning. 

\begin{example}[Ricci curvature for a one-dimensional exponential family on three states]
	\label{ex1}
We study how the Ricci curvature changes with the choice of a probability model and with the choice of the ground metric on sample space. 
In order to obtain a picture as complete as possible, we consider the small setting of three states and one dimensional exponential families.   

Consider the sample space $I=\{1,2,3\}$ with a 
fully connected graph with edges $E=\{ (1,2), (2,3),(1,3)\} $, and weights $\omega=(\omega_{12},\omega_{23},\omega_{13})$. The probability simplex is a triangle 
\begin{equation*}
\mathcal{P}(I)=\Big\{(p_i)_{i=1}^3\in \mathbb{R}^3~:~\sum_{i=1}^3p_i=1, \quad p_i\geq 0\Big\}. 
\end{equation*}
We consider statistical manifolds of the form 
\begin{equation*}
 p(\theta)=\frac{1}{Z(\theta)}(e^{\theta c_1}, e^{\theta c_2}, e^{\theta c_3}),
 \end{equation*}
with sufficient statistic $c=(c_1,c_2,c_3)\in\mathbb{R}^3$, 
parameter $\theta\in \Theta=[\theta_{\text{min}},\theta_{\text{max}}]\subset\mathbb{R}^1$, 
and partition function $Z(\theta)=\sum_{i=1}^3e^{\theta c_i}$. These are exponential families specified by the choice of the sufficient statistic $c$. 
Here, addition of constants is immaterial. 
Multiplicative scaling by non-zero numbers does not change the model. 
For better comparability, we always choose $c$ to have norm one. 

In particular, these models can be indexed by the projective line, which for simplicity we can represent by a half circle, or an angle. 

We fix a uniform reference measure $q = (\frac{1}{3},\frac{1}{3}, \frac{1}{3})$. 
The KL divergence then takes the form 
\begin{equation*}
\operatorname{D}_{\operatorname{KL}}(p\|q)=\sum_{i=1}^3p_i\log\frac{p_i}{q_i}=\sum_{i=1}^3p_i\log p_i+\log 3. 
\end{equation*}

We evaluate the Ricci curvature lower bound for 30 different exponential families and 10 different choices of the ground metric. 
We choose the sufficient statistics as evenly spaced points on a radius 1 half circle, and set the parameter domain as $\Theta = [-2,2]$. 

The results are shown in Figure~\ref{fig:experimen1}. The left panel estimates $K$ as the minimum rate of convergence of the Wasserstein gradient flow of the KL divergence, over a grid of 10 different initial conditions on the parameter domain. 
As can be seen, the convergence is faster, the better $\omega$ connects the end points of the exponential family. 

The right panel estimates $\kappa$ as the minimum eigenvalue of the Hessian operator of the KL divergence over a grid of parameter values in the domain. Figure~\ref{fig:experiment1b} gives a direct comparison of the estimates obtained from convergence rates and the Hessian. As can be seen, the Hessian is always a lower bound of the convergence rate, which reflects Proposition~\ref{thm}. 

If the parameter domain is smaller, the Hessian gives a closer bound to the rate of convergence. 
If, on the contrary, the parameter domain is larger, the gaps between the Hessian and the convergence rates tend to be larger. 
Larger parameters correspond to distributions closer to the boundary of the simplex. We illustrate these effects in the Appendix, where we provide figures with different choices of $\Theta$ (Figures~\ref{fig:experiment1s} and~\ref{fig:experiment1l}), and also comparing the Hessian and rates of convergence at individual parameter values (Figure~\ref{fig:experiment1pointwise}).

\begin{figure}
\includegraphics[clip=true, trim=0cm 5.5cm 0cm 5.5cm,width=.49\linewidth]{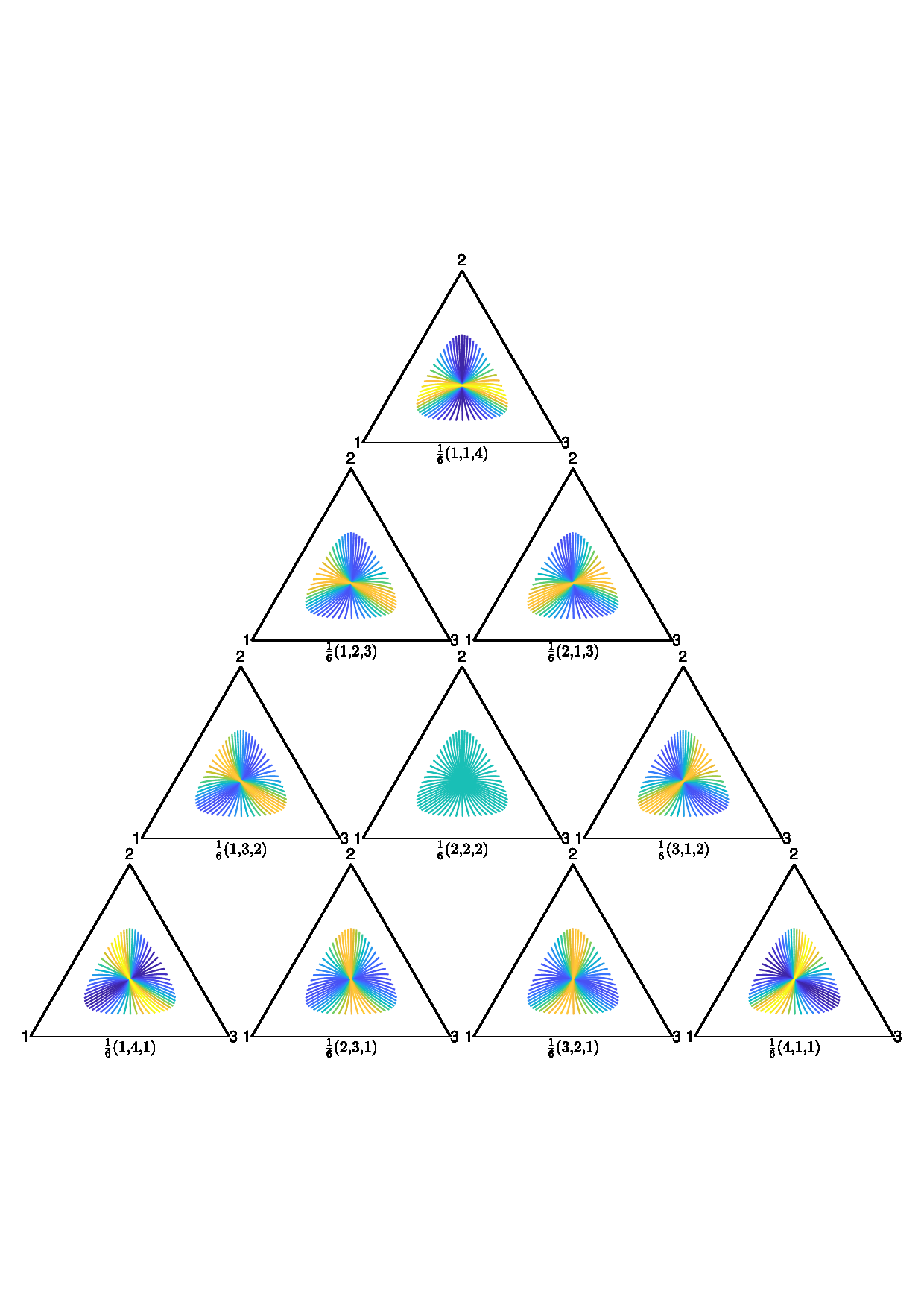}
\includegraphics[clip=true, trim=0cm 5.5cm 0cm 5.5cm,width=.49\linewidth]{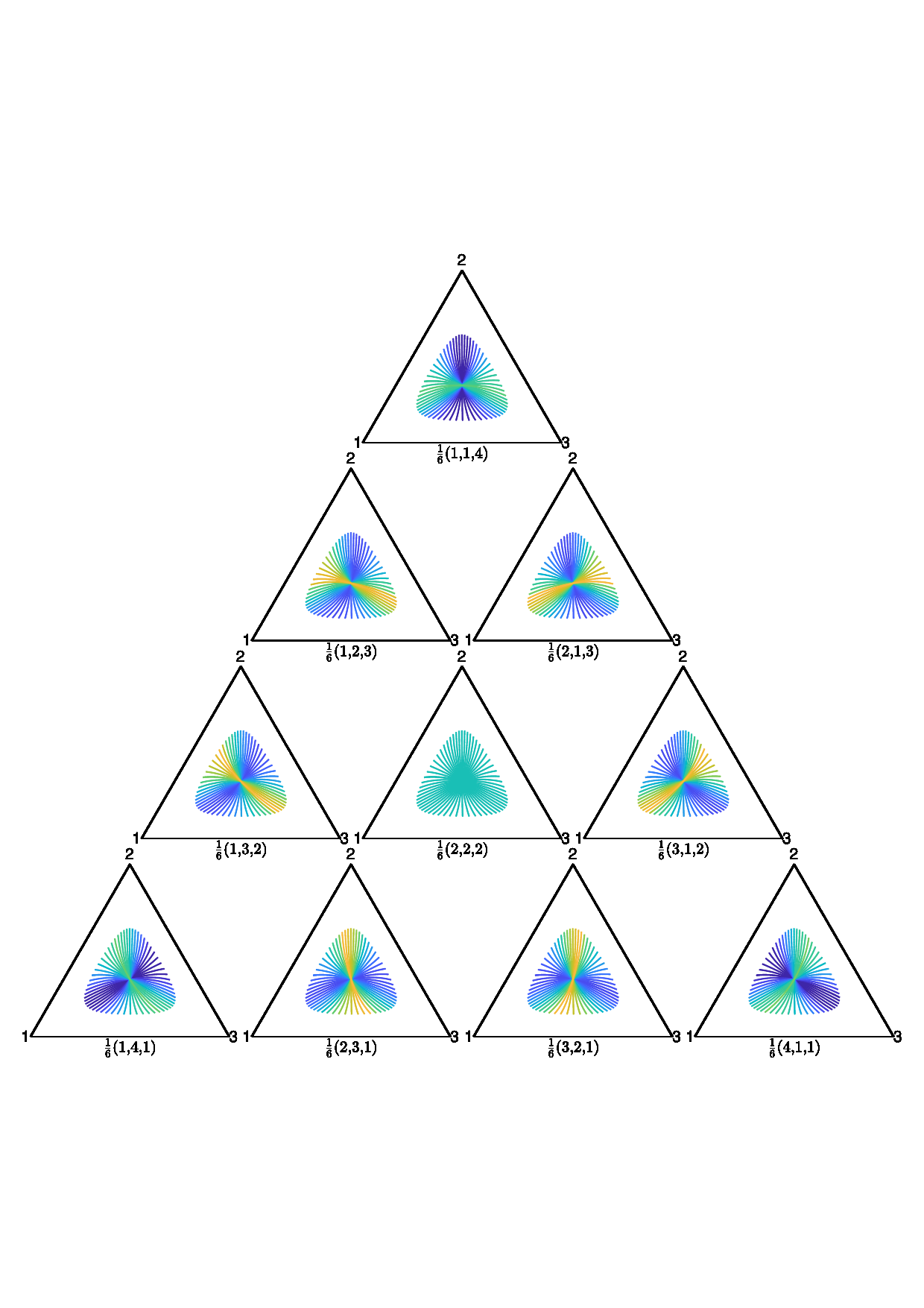}
	\caption{Lower bound on the Ricci curvature for one-dimensional exponential families on three states. 
Each simplex corresponds to a different choice of $\omega=(\omega_{12},\omega_{23},\omega_{13})$, indicated at the bottom. 
Within each simplex there are 30 different exponential families (which are curves) with sufficient statistics of norm one and parameter domain $\Theta=[-1,1]$. 
The color of each exponential family corresponds to the value of $K$ estimated as the minimum convergence rate (left panel), 
and the value of $\kappa$ as the minimum eigenvalue of the Hessian (right panel), over the parameter domain. 
Blue corresponds to lower and yellow to higher values. We give a direct comparison of $K$ and $\kappa$ in Figure~\ref{fig:experiment1b}. 
	}
	\label{fig:experimen1}
\end{figure}
\begin{figure}
\centering
\includegraphics[clip=true, trim=2.5cm 10.5cm 2cm 10cm,width=.8\linewidth]{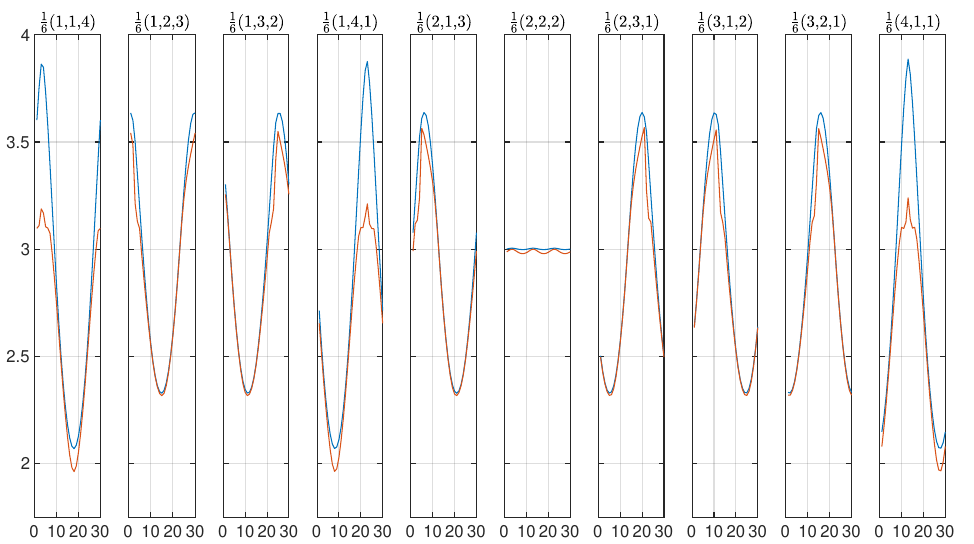}	
\caption{ 
This figure compares the values of $K$ and $\kappa$ from Figure~\ref{fig:experimen1}. Each subplot corresponds to one choice of $\omega$, indicated at the top, with x axis corresponding to the 30 different exponential families. 
As can be seen, the curvature $\kappa$ obtained as the smallest Hessian eigenvalue (red) is, indeed, always a lower bound of the convergence rate $K$ (blue). }
\label{fig:experiment1b}
\end{figure}

\end{example}

\section{Discussion}

To summarize, we introduced a notion of Ricci curvature lower bound for parametric statistical models and illustrated its possible relevance in the context of parameter estimation and learning. 
This notion is based on the geodesic convexity of the KL divergence in Wasserstein geometry. Following the program from~\cite{WSM1}, we hope that this paper continues to strengthen the interactions between information geometry and Wasserstein geometry. 

The Ricci curvature lower bound depends on the target distribution, the statistical model, and the ground metric on sample space. 
We think that this notion can serve to capture the general properties of learning in different models, and hence that it can serve to guide the design of statistical models (e.g., the graph of a graphical model or the connectivity structure of a neural network) and the ground metric. 
Our experiments show that an adequate choice of the two, in conjunction, can significantly increase the rates of convergence in learning. 
On the other hand, the Ricci curvature depends on both, the information and the Wasserstein metric tensors. 
An interesting question arises; namely to find the statistical interpolation of such a connection.

We note that the Ricci curvature lower bound is a global notion over the probability model. 
This is advantageous to provide a uniform analysis, but it can also lead to difficulties, especially when the models include points near the boundary of the simplex, where the behavior is not as regular. 
Our experiments indicate that restricting the parameter domain to a region bounded away from the boundary of the simplex allows us to closely track the rates of convergence. Another challenge is that, being a global quantity, the computation can be challenging. 
Nonetheless, we point out that computing the curvature in terms of the Hessian is much cheaper than estimating the learning rates empirically. We have focused on discrete sample spaces, which allowed us to obtain an intuitive and transparent picture of the relationships that derive form this theory. 
However, we expect that the derivations extend naturally to the case of continuous sample spaces. 
 
Another interesting line of investigation is the following. Our definitions are based on the KL divergence and the Wasserstein and Fisher metric tensors. In principle, it is possible to derive analogous definitions for other metric structures. In particular, one can consider the family of f-divergences. Such an analysis could allow us to compare different learning paradigms.

\appendix 

\newpage

\bigskip

\section{Additional figures to Example~\ref{ex1}}
\begin{figure}[h]
\scalebox{1.2}{ 
	\begin{tikzpicture}[x=1cm,y=1cm]
	\node[] at (0,0){\includegraphics[clip=true, trim=0cm 5.5cm 0cm 5.5cm,width=6cm]{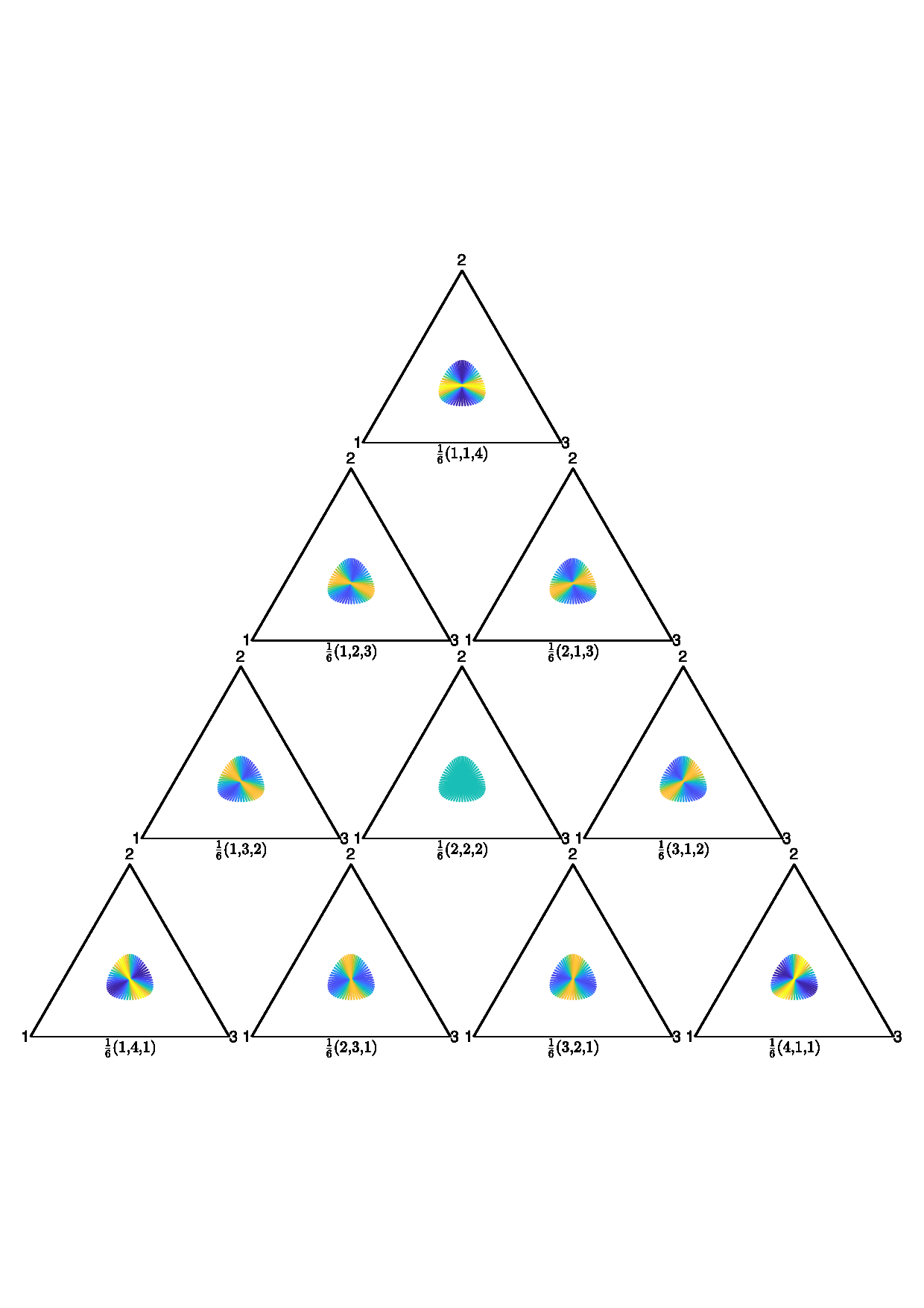}};
	\node[] at (6,0){\includegraphics[clip=true, trim=0cm 5.5cm 0cm 5.5cm,width=6cm]{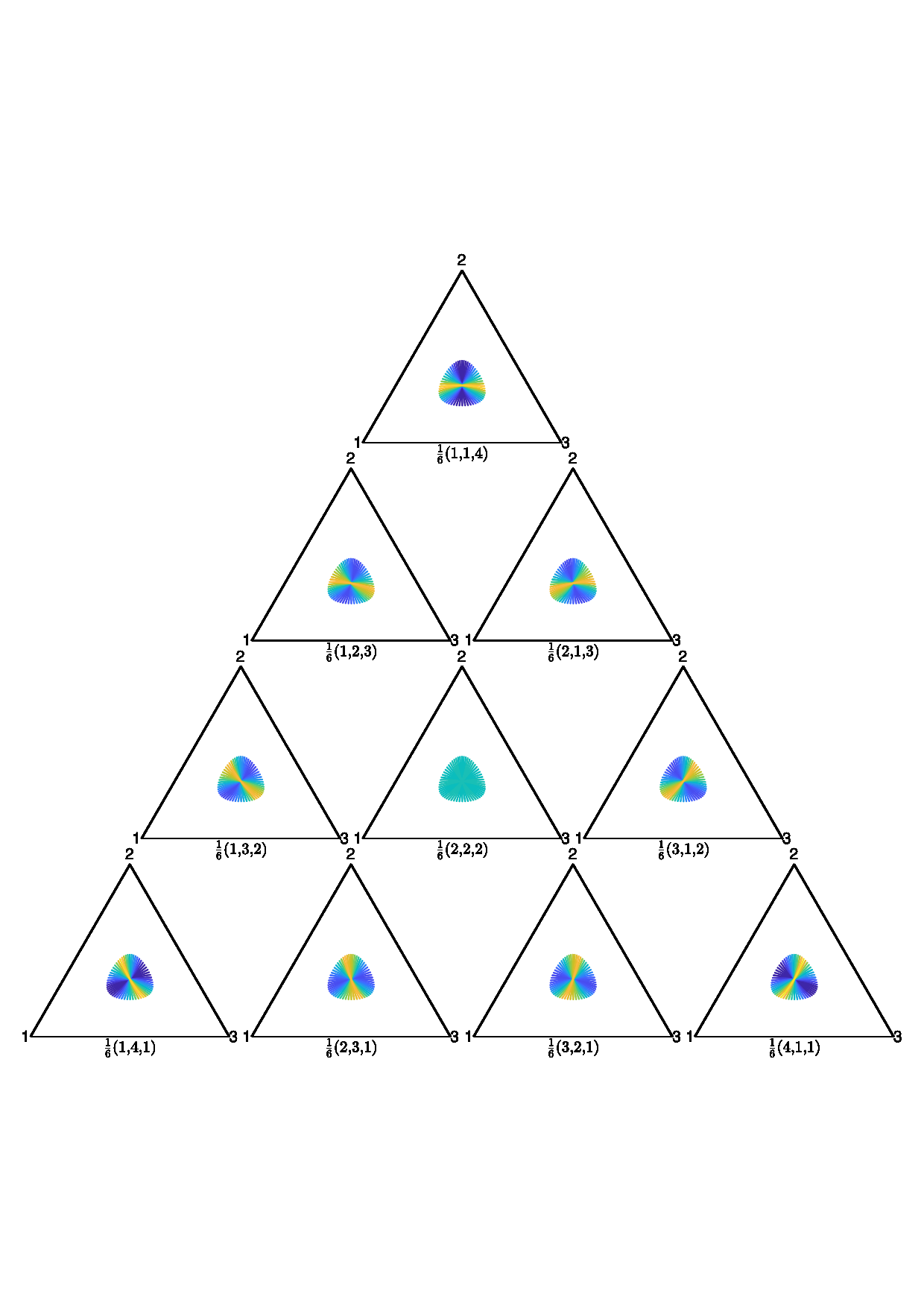}};
	\node[] at (3,2.5){\includegraphics[clip=true, trim=2.5cm 10.5cm 2cm 10cm,width=4cm]{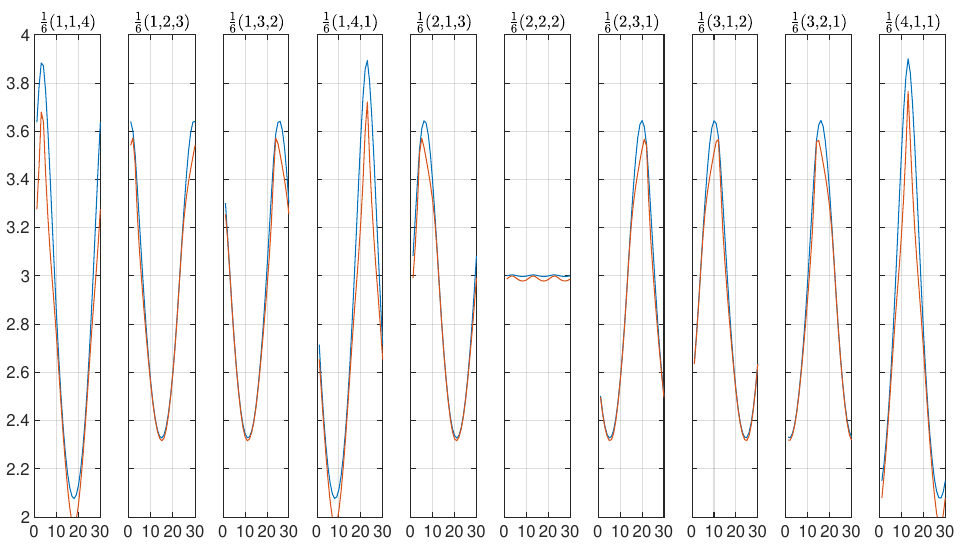}};
	\end{tikzpicture}
}
	\caption{Similar to Figure~\ref{fig:experimen1} but with $\Theta = [-1/2,1/2]$. 
	Note how on this tight parameter domain around $\theta=0$ (the value of the reference measure), the Ricci curvature lower bound gives a very close lower bound on the minimum rate of convergence for each of the models. 
	The middle shows the direct comparison of the two values across the 30 exponential families. The minimum rate of convergence is shown in blue, and the Hessian in red. 
	}
	\label{fig:experiment1s}
\end{figure}

\begin{figure}[h]
\scalebox{1.2}{ 
	\begin{tikzpicture}[x=1cm,y=1cm]
	\node[] at (0,0){\includegraphics[clip=true, trim=0cm 5.5cm 0cm 5.5cm,width=6cm]{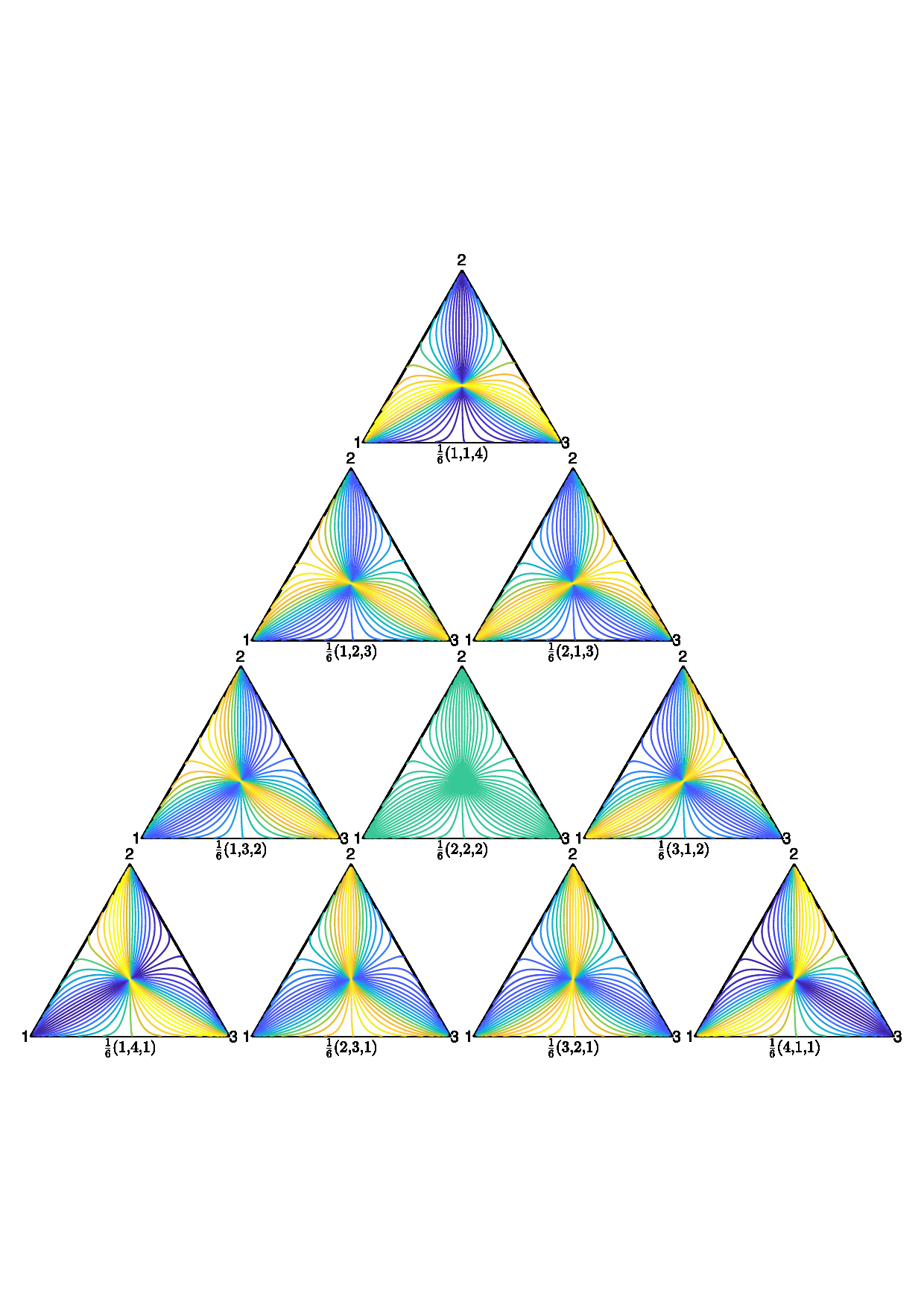}};
	\node[] at (6,0){\includegraphics[clip=true, trim=0cm 5.5cm 0cm 5.5cm,width=6cm]{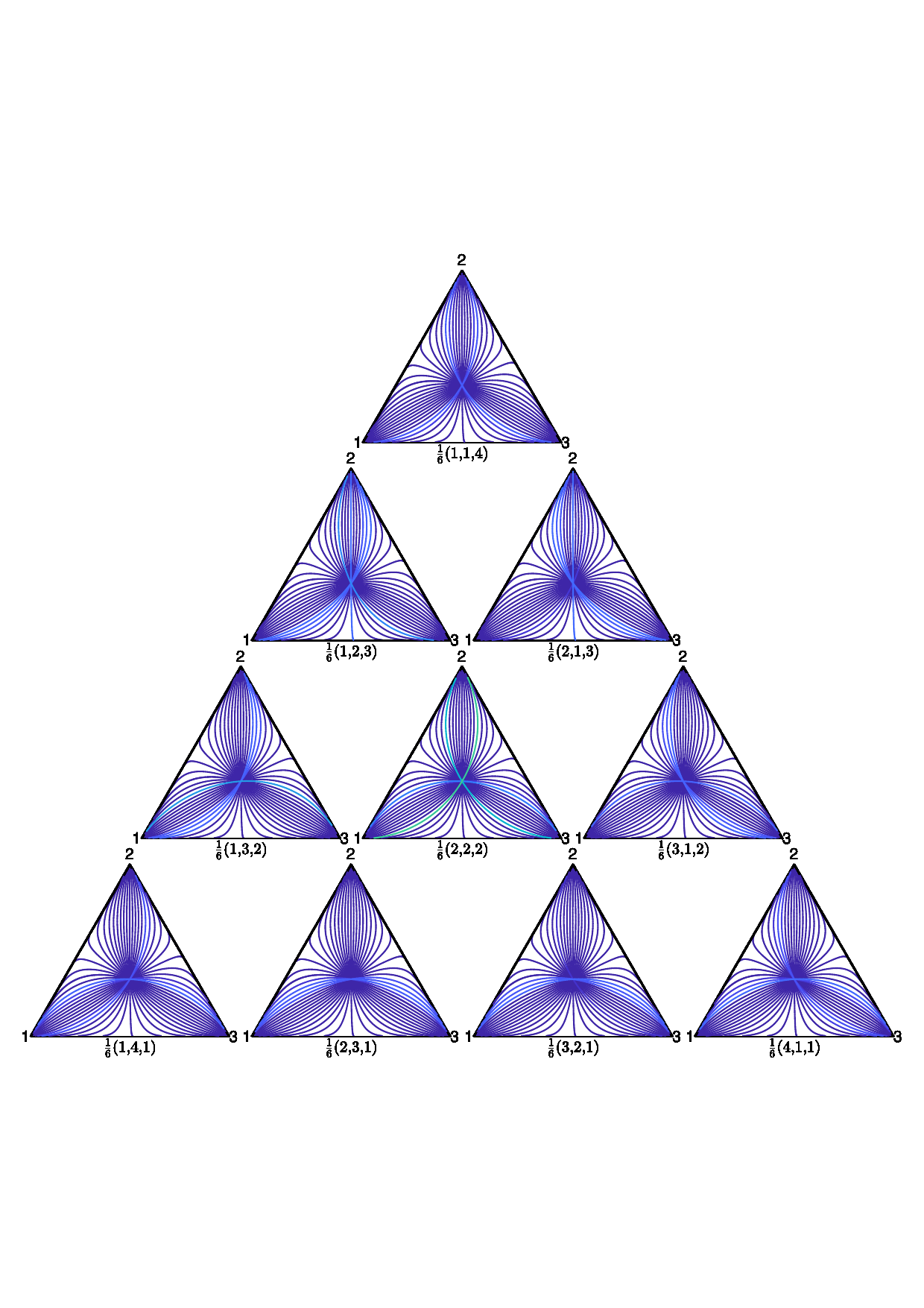}};
	\node[] at (3,2.5){\includegraphics[clip=true, trim=2.5cm 10.5cm 2cm 10cm,width=4cm]{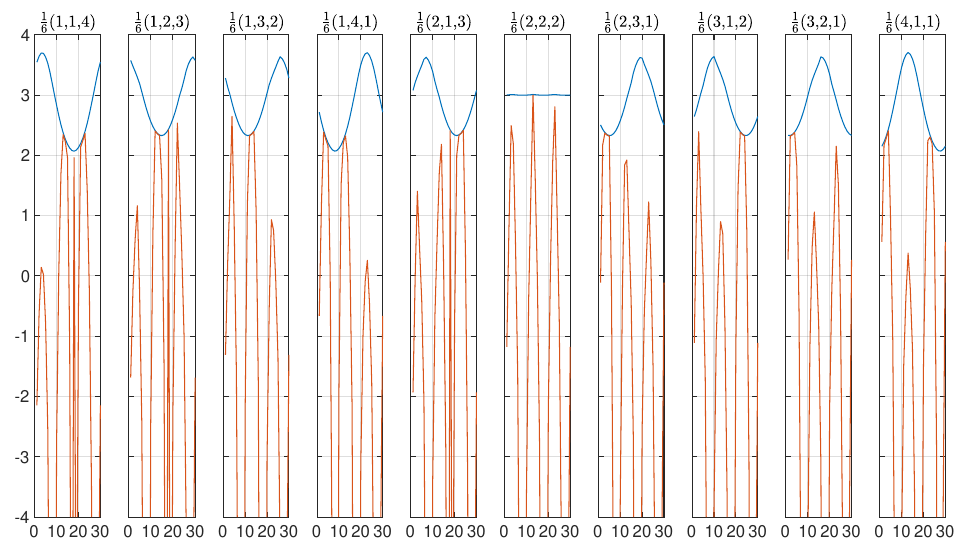}};
	\end{tikzpicture}
}	
	\caption{Similar to Figures~\ref{fig:experimen1}, but with a larger parameter domain $\Theta = [-4,4]$. 
		On this relatively large parameter domain, the models contain points close to the boundary of the simplex, where the Hessian (and the Ricci curvature) can have large oscillations. 
		In turn, we observe larger gaps to the minimum rate of convergence, compared with Figure~\ref{fig:experiment1s}. 
	}
	\label{fig:experiment1l}
\end{figure}

\begin{figure}
\centering
\includegraphics[clip=true,trim=2cm 8cm 2cm 8cm,width=.8\linewidth]{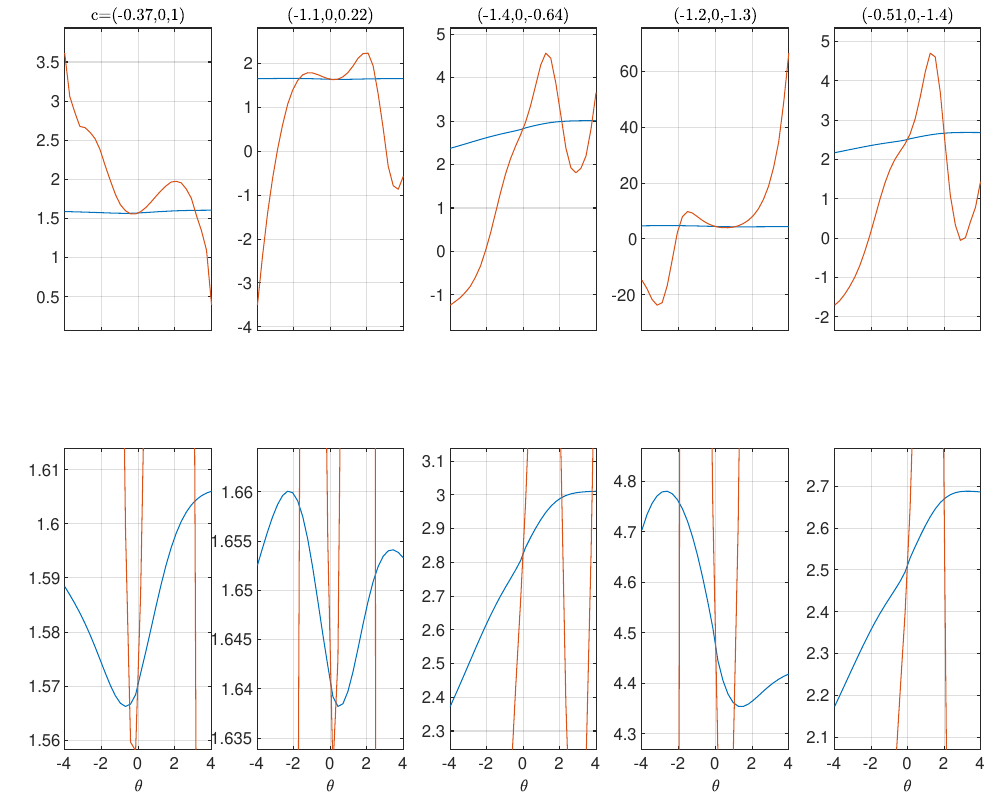}
\caption{Convergence rates and minimum Hessian eigenvalue at individual parameter choices. 
Here we fixed the ground metric  $\omega=(\omega_{12},\omega_{23},\omega_{13})=(1/2,1/2,0)$. 
Each subplot corresponds to one exponential family, with sufficient statistic indicated at the top. 
Within a region around $\theta=0$ (the value of the reference measure), the minimum of the Hessian is closer to the convergence rates. 
In fact, the Hessian eigenvalue intersects the rate of convergence at $\theta=0$. The Hessian at $\theta=0$ is the asymptotic rate of convergence. 
The lower row zooms in the y axis of the upper row. 
For these exponential families, the convergence rates do not vary much across choices of the initial parameter value. 
}
\label{fig:experiment1pointwise}
\end{figure}


\begin{thebibliography}{10}

\bibitem{Amari}
S.~Amari.
\newblock {\em Information Geometry and Its Applications}.
\newblock Number volume 194 in Applied mathematical sciences. {Springer},
  Japan, 2016.

\bibitem{IG}
N.~Ay, J.~Jost, H.~V. L{\^e}, and L.~J. Schwachh{\"o}fer.
\newblock {\em Information Geometry}.
\newblock {Springer}, Cham, 2017.

\bibitem{BE}
D.~Bakry and M.~{\'E}mery.
\newblock {Diffusions hypercontractives}.
\newblock {\em S{\'e}minaire de probabilit{\'e}s de Strasbourg}, 19:177--206,
  1985.

\bibitem{WSM2}
Y.~Chen and W.~Li.
\newblock Natural gradient in {{Wasserstein}} statistical manifold.
\newblock 2018.

\bibitem{Csiszar:2004:ITS:1166379.1166380}
I.~Csisz\'{a}r and P.~C. Shields.
\newblock Information theory and statistics: A tutorial.
\newblock {\em Commun. Inf. Theory}, 1(4):417--528, Dec. 2004.

\bibitem{EMR2}
M.~Erbar and M.~Fathi.
\newblock Poincar{\'e}, modified logarithmic {{Sobolev}} and isoperimetric
  inequalities for {{Markov}} chains with non-negative {{Ricci}} curvature.
\newblock {\em Journal of Functional Analysis}, 274(11):3056--3089, 2018.

\bibitem{EMR3}
M.~Erbar, C.~Henderson, G.~Menz, and P.~Tetali.
\newblock Ricci curvature bounds for weakly interacting {{Markov}} chains.
\newblock {\em Electronic Journal of Probability}, 22, 2017.

\bibitem{Erbar}
M.~Erbar and E.~Kopfer.
\newblock Super {{Ricci}} flows for weighted graphs.
\newblock {\em arXiv:1805.06703 [math]}, 2018.

\bibitem{Maas2012}
M.~Erbar and J.~Maas.
\newblock Ricci {{Curvature}} of {{Finite Markov Chains}} via {{Convexity}} of
  the {{Entropy}}.
\newblock {\em Archive for Rational Mechanics and Analysis}, 206(3):997--1038,
  2012.

\bibitem{EMR4}
M.~Erbar, J.~Maas, and P.~Tetali.
\newblock Discrete {{Ricci Curvature}} bounds for {{Bernoulli}}-{{Laplace}} and
  {{Random Transposition}} models.
\newblock {\em Annales de la facult{\'e} des sciences de Toulouse
  Math{\'e}matiques}, 24(4):781--800, 2015.

\bibitem{EMR1}
M.~Fathi and J.~Maas.
\newblock Entropic {{Ricci}} curvature bounds for discrete interacting systems.
\newblock {\em The Annals of Applied Probability}, 26(3):1774--1806, 2016.

\bibitem{Jost2}
B.~Hua, J.~Jost, and S.~Liu.
\newblock Geometric {{Analysis Aspects}} of {{Infinite Semiplanar Graphs}} with
  {{Nonnegative Curvature}}.
\newblock {\em Journal f{\"u}r die reine und angewandte Mathematik (Crelles
  Journal)}, 2015(700):1--36, 2015.

\bibitem{Jost}
J.~Jost and S.~Liu.
\newblock Ollivier's {{Ricci Curvature}}, {{Local Clustering}} and
  {{Curvature}}-{{Dimension Inequalities}} on {{Graphs}}.
\newblock {\em Discrete Comput. Geom.}, 51(2):300--322, 2014.

\bibitem{Lafferty}
J.~D. Lafferty.
\newblock The {{Density Manifold}} and {{Configuration Space Quantization}}.
\newblock {\em Transactions of the American Mathematical Society},
  305(2):699--741, 1988.

\bibitem{LiG}
W.~Li.
\newblock Geometry of probability simplex via optimal transport.
\newblock {\em arXiv:1803.06360 [math]}, 2018.

\bibitem{WSM1}
W.~Li and G.~Mont\'ufar.
\newblock Natural gradient via optimal transport {{I}}.
\newblock {\em arXiv:1803.07033}, 2018.

\bibitem{LinYau2}
Y.~Lin, L.~Lu, and S.-T. Yau.
\newblock Ricci {{Curvature}} of {{Graphs}}.
\newblock {\em Tohoku Mathematical Journal}, 63(4):605--627, 2011.

\bibitem{LinYau}
Y.~Lin and S.-T. Yau.
\newblock Ricci {{Curvature}} and {{Eigenvalue Estimate}} on {{Locally Finite
  Graphs}}.
\newblock {\em Mathematical Research Letters}, 17(2):343--356, 2010.

\bibitem{Lott_Villani}
J.~Lott and C.~Villani.
\newblock Ricci {{Curvature}} for {{Metric}}-{{Measure Spaces}} via {{Optimal
  Transport}}.
\newblock {\em Annals of Mathematics}, 169(3):903--991, 2009.

\bibitem{Ollivier_Ricci}
Y.~Ollivier.
\newblock Ricci {{Curvature}} of {{Markov Chains}} on {{Metric Spaces}}.
\newblock {\em Journal of Functional Analysis}, 256(3):810--864, 2009.

\bibitem{OV2}
Y.~Ollivier and C.~Villani.
\newblock A {{Curved Brunn}}--{{Minkowski Inequality}} on the {{Discrete
  Hypercube}}, {{Or}}: {{What Is}} the {{Ricci Curvature}} of the {{Discrete
  Hypercube}}?
\newblock {\em SIAM Journal on Discrete Mathematics}, 26(3):983--996, 2012.

\bibitem{OV}
F.~Otto and C.~Villani.
\newblock Generalization of an {{Inequality}} by {{Talagrand}} and {{Links}}
  with the {{Logarithmic Sobolev Inequality}}.
\newblock {\em Journal of Functional Analysis}, 173(2):361--400, 2000.

\bibitem{strum}
K.-T. Sturm.
\newblock On the {{Geometry}} of {{Metric Measure Spaces}}.
\newblock {\em Acta Mathematica}, 196(1):65--131, 2006.

\bibitem{vil2008}
C.~Villani.
\newblock {\em Optimal {{Transport}}: {{Old}} and {{New}}}.
\newblock Number 338 in Grundlehren der mathematischen Wissenschaften.
  {Springer}, Berlin, 2009.

\end{thebibliography}
\end{document}